\theoremstyle{thmstyleone}%
\newtheorem{theorem}{Theorem}[section]
\newtheorem{teo}[theorem]{Theorem}
\newtheorem{obs}[theorem]{Remark}
\newtheorem{cor}[theorem]{Corollary}
\newtheorem{lemma}[theorem]{Lemma}
\newtheorem{prop}[theorem]{Proposition}
\theoremstyle{thmstyletwo}%
\theoremstyle{thmstylethree}%
\newcommand\blfootnote[1]{%
  \begingroup
  \renewcommand\thefootnote{}\footnote{#1}%
  \addtocounter{footnote}{-1}%
  \endgroup
}
\newcommand{\Rr}{\mathbb{R}}
\newcommand{\M}{\mathcal{M}}
\begin{document}

\title[Characterization of digraphs with three complementarity eigenvalues]{Characterization of digraphs with three complementarity eigenvalues}

\author[1]{\fnm{Diego} \sur{Bravo}}\email{dbravo27@gmail.com}

\author*[2]{\fnm{Florencia\blfootnote{Accepted in Journal of Algebraic Combinatorics.}} \sur{Cubría}}\email{fcubria@fing.edu.uy}

\author[2]{\fnm{Marcelo} \sur{Fiori}}\email{mfiori@fing.edu.uy}

\author[3]{\fnm{Vilmar} \sur{Trevisan}}\email{trevisan@mat.ufrgs.br}

\affil[1]{\orgdiv{R\&D Department}, \orgname{Tryolabs}, \state{Montevideo}, \country{Uruguay}}

\affil*[2]{\orgdiv{Instituto de Matemática y Estadística ``Rafael Laguardia''}, \orgname{Facultad de Ingenier\'{i}a, Universidad de la Republica}, \orgaddress{ \country{Uruguay}}}


\affil[3]{\orgdiv{Instituto de Matemática}, \orgname{Universidade Federal do Rio Grande do Sul}, \orgaddress{\city{Porto Alegre}, \country{Brazil}}}

\abstract{Given a digraph $D$, the complementarity spectrum of the digraph is defined as the set of complementarity eigenvalues of its adjacency matrix. This complementarity spectrum has been shown to be useful in several fields, particularly in spectral graph theory.
The differences between the properties of the complementarity spectrum for (undirected) graphs and for digraphs makes the study of the latter of particular interest, and characterizing strongly connected digraphs with a small number of complementarity eigenvalues is a nontrivial problem.
Recently, strongly connected digraphs with one and two complementarity eigenvalues have been completely characterized. In this paper we study {strongly connected} digraphs with exactly three elements in the complementarity spectrum, ending with a  complete characterization.
{This leads to a structural characterization of general digraphs having three complementarity eigenvalues.}
}

\maketitle

\section{Introduction}

Spectral Graph Theory studies the connection between structural graph properties and the spectral decomposition of certain matrices associated with the graph. {As prominent examples}, {from a powerful result of Sachs~\cite{Sachs1964},} the cycles of a given graph allow one to compute the characteristic polynomial of the adjacency matrix. On the other direction, {the seminal work of Fiedler~\cite{fiedler73} relates} the eigendecomposition of the Laplacian matrix and the connectivity of the graph for instance, and {it is well known that} certain properties of the eigenvalues and eigenvectors of the adjacency matrix give information about the automorphism group of the graph, or the regularity of the graph, just to name a few examples.%

One of the problems early addressed {in this area was the characterization of a graph by its spectrum. The initial belief that only isomorphic graphs share their spectra was soon proven wrong by the first examples of cospectral non-isomorphic graphs and digraphs from the 1950s \cite{Collatz1957}. Since then, the progress and the built knowledge in spectral graph theory is remarkable. We refer to the books \cite{Brouwer12,Doob,Cvetkovic1998} and references therein for an account of this progress.}


More recently, the concept of complementarity eigenvalues\footnote{Also referred as \textit{complementary eigenvalues}.} for matrices was introduced \cite{Seeger99}, and later on, applications were found in the context of graph theory \cite{Fernandes2017, Seeger2018}. It has been observed that this complementarity spectrum allows to distinguish more graphs than the traditional eigenvalues, so a natural question is whether a graph is determined by its complementarity eigenvalues {that is, whether only connected isomorphic graphs share their complementarity spectra.} This question remains unanswered as {of today.}

The concept of complementarity eigenvalues for graphs, and in particular its relationship to structural properties, was recently extended to digraphs~\cite{flor}. {It is worth noticing that it is shown that there exist} examples of non-isomorphic digraphs with the same complementarity spectrum. However, several questions concerning digraph characterization through the complementarity spectrum remain open. {In particular, determining which digraphs have a small number of complementarity eigenvalues is a proposed problem.}

In this paper we address the problem of characterizing all the strongly connected digraphs with exactly three complementarity eigenvalues, {which will be denoted $\mathcal{SCD}_{3}$}.	
For the traditional spectrum, the characterization of graphs with few eigenvalues seems to be easier than for digraphs \cite{Olivieri,Doob}. In the context of the complementarity spectrum, the problem of describing all graphs with three complementarity eigenvalues presents no challenges, since necessarily these graphs must have three vertices or less \cite{Seeger2018}, and therefore the problem becomes trivial. However, this characterization problem becomes interesting for digraphs, since this size restriction is no longer present.

In \cite{flor}, all digraphs with one or two complementarity eigenvalues are {completely characterized,} and the characterization problem for digraphs with three complementarity eigenvalues is posed. In this work we extend this {characterization} for these digraphs.

Our main results are as follows. We first observe that in the context of digraphs we may assume that digraphs are strongly connected (see Section~\ref{sec:preliminares}). Our preliminary result, Theorem~\ref{caracterizacion}, gives a characterization of {$\mathcal{SCD}_3$}
in terms of their cyclic structure. We then use this structural characterization to prove the main result of this paper, which is a full characterization theorem in Section \ref{sec:teorema_todo_junto}. In other words, we determine exactly which strongly connected digraphs have this structural property, thus giving a complete characterization of {$\mathcal{SCD}_3$}.

We build the proof of this result through the paper, structuring it as follows. In Section \ref{sec:preliminares} we present the preliminaries and useful results. In particular, we present the definition of complementary eigenvalues for matrices, for digraphs, and the main results connecting these eigenvalues with structural properties of the digraph. {In Section~\ref{sec:familes}, we present families of 
digraphs in {$\mathcal{SCD}_3$}.
In particular, in Subsection \ref{sec:basic_families} we present two fundamental examples 
namely, the $\infty$-digraphs, and the $\theta$-digraph. In sections \ref{sec:digrafos_con_infinito} and \ref{sec:digrafos_con_teta}, we see how these digraphs may be modified, by adding arcs, while keeping three complementarity eigenvalues. We are able to list seven of these {types of digraphs}, three of them having an underlying $\infty$-subdigraphs and four of them having an underlying $\theta$-subdigraph} {and not an underlying $\infty$-subdigraph}.

It turns out that these {seven types} are the only digraphs in {$\mathcal{SCD}_3$}
Section~\ref{sec:char} contains the proof of this result. Giving the cyclic structure characterization we give in Theorem~\ref{caracterizacion}, the problem has a natural combinatorial flavor and so does our approach to prove the result, which is algebraic in nature. 

\section{Preliminaries}
\label{sec:preliminares}

Let $D=(V,E)$ be a finite simple digraph  with vertices labelled as $1, \hdots, n$. The adjacency matrix of $D$ is defined as $A(D)=(a_{ij})$ where
\[a_{ij}=
\begin{cases}
1 \quad \text{if }(i,j)\in E,\\
0 \quad \text{otherwise.}
\end{cases}
\]

The multiset of roots of the characteristic polynomial of $A(D)$, counted with their multiplicities, is the \textit{spectrum} of $D$, {denoted by $Sp(D)$}. If $D$ is a digraph with strongly connected components $D_1, \hdots,D_k$, then $Sp(D)=\sqcup_{i=1}^k Sp(D_i)$ where $\sqcup$ denotes the union of multisets.

Throughout this paper, $\rho(\cdot)$ denotes the spectral radius (i.e., the largest module of the eigenvalues) of a matrix. For nonnegative irreducible matrices, it is well known that the spectral radius coincides with the largest real eigenvalue, due to the Perron-Frobenius Theorem. Additionally, in this case, the spectral radius is simple and may be associated with an eigenvector $x>\textbf{o}$, where \textbf{o} denotes the null vector in $\Rr^n$ and $\geq$ {(or $>$)} means that the inequality holds for every coordinate.

This real positive value $\rho(A(D))$ is called the \emph{spectral radius of the digraph $D$} and is denoted by $\rho(D)$. As we will see, the spectral radius of the digraph plays a fundamental role in the results obtained in this paper.

A digraph $H=(V',E')$ is a subdigraph of $D$ (denoted $H\leq D$) if $V'\subseteq V$ and $E'\subseteq E$. We say that $H$ is an induced subdigraph if $E'=E \cap (V'\times V')$ and a proper subdigraph if $E'\neq E$.\\

The following Lemma is a consequence of Perron-Frobenius Theorem, but we state it here for easy reference.
\begin{lemma}\label{lem:sub} Let $H$ be a proper subdigraph of a strongly connected digraph $D$. Then $\rho(H) < \rho(D)$.
\end{lemma}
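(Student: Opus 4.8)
The statement is that a proper subdigraph $H$ of a strongly connected digraph $D$ satisfies $\rho(H) < \rho(D)$. The plan is to reduce everything to a clean statement about nonnegative matrices and the Perron--Frobenius theory. First I would observe that we may assume $H$ and $D$ have the same vertex set: if $V' \subsetneq V$, then since $D$ is strongly connected, adding back the missing vertices together with any arcs of $D$ incident to them (there is at least one, again by strong connectivity) produces an intermediate proper subdigraph $H'$ with $H \le H' \le D$ and $V(H') = V$, and $\rho(H) \le \rho(H')$ because $A(H)$ is (after permutation) a principal submatrix of $A(H')$, so nonnegative matrix monotonicity gives $\rho(A(H)) \le \rho(A(H'))$. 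Hence it suffices to prove the strict inequality when $H$ and $D$ differ only in that $E' \subsetneq E$ over the same vertex set.

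Now $B := A(H)$ and $A := A(D)$ are nonnegative matrices of the same size with $B \le A$ entrywise and $B \ne A$ (at least one arc of $D$ is missing from $H$, so at least one entry is strictly smaller). Since $D$ is strongly connected, $A$ is irreducible, so by Perron--Frobenius $\rho(A)$ is a simple eigenvalue with a strictly positive eigenvector $x > \mathbf{o}$, i.e. $A x = \rho(D)\, x$. The key step is the strict-monotonicity property of the Perron root under irreducibility: I would pick a left Perron eigenvector, or more simply argue directly. Here is the direct argument I would write out. Let $y > \mathbf{o}$ be the left Perron eigenvector of $A$, so $y^{\top} A = \rho(D)\, y^{\top}$. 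For any eigenvalue $\lambda$ of $B$ with $|\lambda| = \rho(B)$, let $z \ge \mathbf{o}$, $z \ne \mathbf{o}$, be a corresponding nonnegative eigenvector of $B$ (which exists since $B \ge 0$), so $B z = \rho(B)\, z$ after possibly passing to $|\lambda|$ via the standard fact $\rho(B) \le \rho(A)$ and the existence of a nonnegative eigenvector for the spectral radius of a nonnegative matrix. Then
\[
\rho(B)\, y^{\top} z = y^{\top} (B z) \le y^{\top}(A z) = (y^{\top} A) z = \rho(D)\, y^{\top} z,
\]
and since $y > \mathbf{o}$ and $z \ge \mathbf{o}$, $z \ne \mathbf{o}$, we have $y^{\top} z > 0$, giving $\rho(B) \le \rho(D)$. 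To get strictness, suppose $\rho(B) = \rho(D)$; then equality holds throughout, so $y^{\top}(A - B) z = 0$. But $y > \mathbf{o}$ and $A - B \ge 0$ is a nonzero nonnegative matrix, so $y^{\top}(A-B) > \mathbf{o}$ (strictly positive as a row vector), whence $y^{\top}(A-B) z = 0$ forces $z = \mathbf{o}$, a contradiction. Therefore $\rho(H) = \rho(B) < \rho(A) = \rho(D)$.

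The main obstacle, and the only place one must be slightly careful, is the reduction in the first paragraph together with the invocation of the right form of Perron--Frobenius: one needs that a nonnegative (not necessarily irreducible) matrix $B$ has $\rho(B)$ as an eigenvalue with a nonnegative eigenvector, and that $\rho(\cdot)$ is monotone on principal submatrices; both are standard and can be cited. Everything else is the short computation above. Alternatively, if one prefers an even softer proof, one can cite the classical theorem (e.g. from Horn--Johnson, or the Perron--Frobenius chapter of any standard reference) that for an irreducible nonnegative matrix $A$, if $0 \le B \le A$ and $B \ne A$ then $\rho(B) < \rho(A)$ directly, and then only the vertex-set reduction needs to be spelled out; I would likely present the self-contained computation for completeness.
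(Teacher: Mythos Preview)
The paper does not actually prove this lemma; it is introduced with ``The following is well known, but we state here for easy reference'' and no argument is given. So there is nothing in the paper to compare against, and your proposal is supplying a proof where the authors simply cite folklore. That said, your argument has one genuine error and one minor slip worth fixing.

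The genuine error is in the strictness step. You assert that because $y > \mathbf{o}$ and $A-B \ge 0$ is nonzero, the row vector $y^{\top}(A-B)$ is strictly positive. This is false: its $j$-th entry $\sum_i y_i (A-B)_{ij}$ vanishes whenever column $j$ of $A-B$ is zero, and if only a single arc is removed then $A-B$ has exactly one nonzero column. So from $y^{\top}(A-B)z = 0$ you cannot conclude $z = \mathbf{o}$. The standard repair is to first force $z > \mathbf{o}$: assuming $\rho(B) = \rho(A)$, one has $Az \ge Bz = \rho(A)\,z$, and then $y^{\top}\bigl(Az - \rho(A)z\bigr) = 0$ together with $y > \mathbf{o}$ gives $Az = \rho(A)\,z$; irreducibility of $A$ now makes $z$ a Perron eigenvector, hence $z > \mathbf{o}$, and then $(A-B)z = 0$ with $z > \mathbf{o}$ forces $A = B$, the desired contradiction. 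Alternatively, as you note at the end, one can simply cite the classical strict monotonicity theorem for irreducible nonnegative matrices and skip the computation.

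The minor slip is in the vertex-set reduction. If $H$ is an \emph{induced} subdigraph on $V' \subsetneq V$, then adding back the missing vertices together with all arcs of $D$ incident to them recovers $D$ itself, so your $H'$ is not proper and the reduction stalls. The clean fix is to add the missing vertices as isolated vertices (equivalently, pad $A(H)$ with zero rows and columns): this leaves the spectral radius unchanged, and the resulting matrix $B$ satisfies $0 \le B \le A$ with $B \ne A$ because, by strong connectivity of $D$, every vertex of $D$ has an incident arc while the padded vertices have none in $B$.
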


We use the term \textit{cycle} to refer to a directed cycle in a digraph. As noted in \cite{flor}, the spectral radius of the cycle digraph $\vec{C}_n$, is $\rho(\vec{C}_n)=1.$\\

The Eigenvalue Complementarity Problem (EiCP) introduced in \cite{Seeger99} has found many applications in different fields of science, engineering and economics \cite{Adly2015,Facchinei2007,Pinto2008,Pinto2004}.

Given a matrix $A \in \M_n(\mathbb{R})$, the set of \textit{complementarity eigenvalues} is defined as those $\lambda \in \Rr$ such that there exists a vector $x \in \Rr^n$, not null and nonnegative, verifying
 $Ax \geq \lambda x$, and
\begin{equation*}
\label{complement}
\langle  x, Ax-\lambda x \rangle =0.
\end{equation*}

If we write $w=Ax-\lambda x\geq \textbf{o}$, the previous condition results in\[x^tw=0 \]
which means to ask for
\[x_i=0 \quad \text{or} \quad w_i=0 \text{ for all } i=1 \hdots, n.\]

This last condition is called \textit{complementarity condition}.

The set of all complementarity eigenvalues of a matrix $A$ is called the \textit{complementarity spectrum} of $A$, and it is denoted $\Pi(A)$. Unlike the regular spectrum of a matrix, the complementarity spectrum is a set (not a multiset), and the number of complementarity eigenvalues is not determined by the size of the matrix.

It is known that if $\lambda$ is a complementarity eigenvalue of $A$, then it is a complementarity eigenvalue of $PAP^t$ as well, for every permutation matrix $P$ \cite{Pinto2008}.

This fact allows us to define the complementarity spectrum of a digraph, since the complementarity spectrum is invariant in the family of adjacency matrices associated to the digraph.

The following theorem \cite{flor}, which extends an existing result for graphs, leads to a simple and useful characterization of the complementarity eigenvalues for digraphs. It allows us to characterize the complementarity eigenvalues of a digraph in terms of its structural properties.

\begin{teo}
\label{compl_spect_induced_subdigraphs} Let $D$ be a digraph and $\Pi(D)$ its complementarity spectrum. Then
\[\Pi(D)=\{\rho(H): {H \text{ induced strongly connected subdigraph of }D}\}.\]
\end{teo}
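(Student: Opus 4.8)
The plan is to prove the two set inclusions separately, using only Perron--Frobenius theory for nonnegative matrices together with the fact, recalled in Section~\ref{sec:preliminares}, that the spectrum of a digraph is the multiset union of the spectra of its strongly connected components; in particular $\rho$ of a digraph equals the maximum of the $\rho$ of its strongly connected components.

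For $\{\rho(H): H \text{ induced strongly connected subdigraph of }D\}\subseteq \Pi(D)$, I would fix such an $H$ on vertex set $V'\subseteq V$. Since $H$ is strongly connected, $A(H)$ is irreducible (or the trivial $1\times 1$ zero matrix), so Perron--Frobenius yields an eigenvector $y>\textbf{o}$ of $A(H)$ for the eigenvalue $\rho(H)\ge 0$. Let $x\in\R^n$ be $y$ extended by zeros off $V'$. Because $H$ is \emph{induced}, for $i\in V'$ we get $(A(D)x)_i=(A(H)y)_i=\rho(H)x_i$, while for $i\notin V'$ we get $(A(D)x)_i\ge 0=\rho(H)x_i$ as $A(D)\ge 0$ and $x_i=0$. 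Hence $w:=A(D)x-\rho(H)x\ge\textbf{o}$, and $\langle x,w\rangle=0$ since $w_i=0$ on $V'$ and $x_i=0$ off $V'$. Thus $\rho(H)\in\Pi(D)$.

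For the reverse inclusion, let $\lambda\in\Pi(D)$ be witnessed by $x\ge\textbf{o}$, $x\ne\textbf{o}$, with $w=A(D)x-\lambda x\ge\textbf{o}$ and $\langle x,w\rangle=0$. Set $S=\{i:x_i>0\}\ne\emptyset$; complementarity forces $w_i=0$ for $i\in S$, i.e.\ $(A(D)x)_i=\lambda x_i$ there, and since $x$ is supported on $S$ this reads $B\tilde x=\lambda\tilde x$, where $B=A(D[S])$ is the adjacency matrix of the subdigraph induced on $S$ and $\tilde x>\textbf{o}$ is the restriction of $x$. A nonnegative matrix admitting a strictly positive eigenvector necessarily has that eigenvalue equal to its spectral radius (pair the eigenvalue equation with a nonnegative left eigenvector for $\rho(B)$ and use $\tilde x>\textbf{o}$); hence $\lambda=\rho(B)=\rho(D[S])$. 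Finally, $\rho(D[S])=\max_j\rho(C_j)$ over the strongly connected components $C_j$ of $D[S]$, and each $C_j$, being a strongly connected component of an induced subdigraph, is itself an induced strongly connected subdigraph of $D$; choosing $C_j$ that attains the maximum gives $\lambda=\rho(C_j)$, which finishes the proof.

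The first inclusion is routine. The delicate step is the second one, because $D[S]$ need not be strongly connected, so one must legitimately descend to a strongly connected component, and one must also know that the strictly positive eigenvector pins $\lambda$ down as the spectral radius rather than merely as some eigenvalue. Both issues are settled by Perron--Frobenius together with the component decomposition of the spectrum, as sketched above, and I anticipate no further obstacles.
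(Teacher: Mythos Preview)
The paper does not actually prove this theorem; it is quoted from the reference~\cite{flor} as a known result, so there is no proof in the present paper to compare against.

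That said, your argument is correct and is essentially the standard one. The first inclusion is routine, as you note. For the second, the two points you flag as delicate are handled properly: restricting to the support $S$ yields $B\tilde x=\lambda\tilde x$ with $B\ge 0$ and $\tilde x>\textbf{o}$, and pairing with a nonnegative left eigenvector $z$ of $B$ for $\rho(B)$ (which exists for any nonnegative matrix, not only irreducible ones) gives $(\lambda-\rho(B))\,z^t\tilde x=0$ with $z^t\tilde x>0$, hence $\lambda=\rho(B)$. Then the component decomposition of the spectrum recalled in Section~\ref{sec:preliminares} gives $\rho(D[S])=\max_j\rho(C_j)$, and since an induced subdigraph of an induced subdigraph is induced in $D$, the maximizing component $C_j$ is an induced strongly connected subdigraph of $D$ with $\rho(C_j)=\lambda$. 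No gaps.
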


Particularly, the complementarity spectrum of a digraph is then a set of nonnegative real numbers containing $0$.\\


{
The next result, which can be found in~\cite{flor}, allows one to focus on the study of strongly connected digraphs.
\begin{prop}\label{prop:strong}
Let $D$ be a digraph and $D_1, \hdots, D_k$ the digraph generated by the strongly connected components of $D$. Then,
\[
\Pi(D)=\cup_{i=1}^k \Pi(D_i).
\]
\end{prop}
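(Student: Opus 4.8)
The plan is to reduce everything to the structural description of the complementarity spectrum provided by Theorem~\ref{compl_spect_induced_subdigraphs}, namely that $\Pi(D)$ is exactly the set of spectral radii of induced strongly connected subdigraphs of $D$, and then to observe that an induced strongly connected subdigraph can only ``live'' inside a single strongly connected component of $D$.

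First I would prove the inclusion $\Pi(D)\subseteq \bigcup_{i=1}^k \Pi(D_i)$. Let $\lambda\in\Pi(D)$; by Theorem~\ref{compl_spect_induced_subdigraphs} there is an induced strongly connected subdigraph $H\leq D$ with $\rho(H)=\lambda$. Fix a vertex of $H$ and let $D_i$ be the strongly connected component of $D$ containing it. For any other vertex $v$ of $H$, strong connectivity of $H$ yields directed paths from $v$ to the fixed vertex and back, all lying within $H$ and hence within $D$; therefore $v$ belongs to the same component $D_i$. Thus $V(H)\subseteq V(D_i)$. Since $H$ is induced in $D$ and $V(H)\subseteq V(D_i)$, its arc set equals $E(D)\cap(V(H)\times V(H))=E(D_i)\cap(V(H)\times V(H))$, so $H$ is also an induced subdigraph of $D_i$, and it is strongly connected. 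Applying Theorem~\ref{compl_spect_induced_subdigraphs} to $D_i$ gives $\lambda=\rho(H)\in\Pi(D_i)$.

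For the reverse inclusion, fix $i$ and let $\lambda\in\Pi(D_i)$. Again by Theorem~\ref{compl_spect_induced_subdigraphs}, $\lambda=\rho(H)$ for some induced strongly connected subdigraph $H$ of $D_i$. Since $D_i$ is itself the induced subdigraph of $D$ on the vertex set of the $i$-th component, and $V(H)\subseteq V(D_i)$, the same arc-set identity as above shows that $H$ is an induced subdigraph of $D$; being strongly connected, Theorem~\ref{compl_spect_induced_subdigraphs} yields $\lambda\in\Pi(D)$. Combining the two inclusions gives the claimed equality.

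I do not expect a genuine obstacle here: the only point requiring a little care is the bookkeeping of the ``induced'' condition when transferring a subdigraph between $D$ and $D_i$, which follows from transitivity of taking induced subdigraphs together with the fact that each $D_i$ is an induced subdigraph of $D$. The substantive observation — that strong connectivity confines a subdigraph to a single component — is elementary, and no appeal to Lemma~\ref{lem:sub} or to Perron--Frobenius is needed.
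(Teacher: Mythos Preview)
Your argument is correct: the reduction to Theorem~\ref{compl_spect_induced_subdigraphs} together with the elementary fact that a strongly connected subdigraph is entirely contained in a single strongly connected component (which is itself induced in $D$) gives both inclusions cleanly, and the handling of the ``induced'' condition via transitivity is exactly what is needed. Note, however, that the paper does not supply its own proof of Proposition~\ref{prop:strong}; it simply cites the result from~\cite{flor}, so there is no in-paper argument to compare yours against.
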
}

The following result, also appearing in \cite{flor}, relates the complementarity spectrum with the cyclic structure of the digraph, showing that  the complementarity spectrum encodes structural properties of the digraph.

\begin{theorem}\label{tm:char} Let $D$ be a digraph and $\Pi(D)$ its complementarity spectrum. The three statements in (1) are equivalent to each other, and the three statements in (2) are equivalent to each other.
\begin{enumerate}
\item
\begin{enumerate}
    \item[(a)] $\Pi(D)=\{0\}$,
    \item[(b)] $\#\Pi(D)=1$,
    \item[(c)] $D$ is acyclic.
    \end{enumerate}
\item
    \begin{enumerate}
    \item[(a)] $\Pi(D)=\{0,1\}$,
    \item[(b)] $\#\Pi(D)=2$,
    \item[(c)] $D$ is not acyclic and its strongly connected components are either cycles or isolated vertices.
    \end{enumerate}
\end{enumerate}
\end{theorem}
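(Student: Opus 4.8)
The plan is to route everything through Theorem~\ref{compl_spect_induced_subdigraphs}, which identifies $\Pi(D)$ with the set of spectral radii of the induced strongly connected subdigraphs of $D$, together with two elementary facts recorded earlier: that $0\in\Pi(D)$ for every (nonempty) digraph, realized by any single vertex viewed as a strongly connected induced subdigraph, and that $\rho(\vec{C}_n)=1$ for every $n$. The technical heart is the following claim, which I would isolate as a lemma: a strongly connected digraph $H$ satisfies $\rho(H)=1$ if and only if $H$ is a directed cycle $\vec{C}_k$.

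For that lemma, the "if" direction is the recorded value $\rho(\vec{C}_k)=1$. For the converse, suppose $\rho(H)=1$; since $\rho(H)>0$, the digraph $H$ contains a cycle, and I would take a \emph{shortest} cycle $C$ in $H$. A shortest cycle is chordless, since any chord would close a strictly shorter cycle; hence $C$ is an induced subdigraph of $H$ isomorphic to $\vec{C}_k$, so $\rho(C)=1$. If $H\neq C$, then $C$ is a \emph{proper} induced subdigraph of the strongly connected $H$, and Lemma~\ref{lem:sub} forces $\rho(C)<\rho(H)$, i.e.\ $1<1$, a contradiction. Therefore $H=C=\vec{C}_k$. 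The same shortest-cycle argument also yields the companion observation that whenever $D$ is not acyclic, $1\in\Pi(D)$.

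With these in hand, both chains of equivalences follow by short implications. For part (1): (a)$\Rightarrow$(b) is immediate; (b)$\Rightarrow$(c) holds because $0\in\Pi(D)$, so $\#\Pi(D)=1$ forces $\Pi(D)=\{0\}$, while a cycle in $D$ would give $1\in\Pi(D)$, a contradiction; (c)$\Rightarrow$(a) holds because every induced strongly connected subdigraph of an acyclic digraph is a single vertex, hence has spectral radius $0$. For part (2): (a)$\Rightarrow$(b) is immediate. If $\#\Pi(D)=2$, then $D$ is not acyclic (else part (1) applies), so $\{0,1\}\subseteq\Pi(D)$ and hence $\Pi(D)=\{0,1\}$; now each strongly connected component $D_i$, being an induced strongly connected subdigraph of $D$, has $\rho(D_i)\in\{0,1\}$, which by the lemma means $D_i$ is an isolated vertex when $\rho(D_i)=0$ and a directed cycle when $\rho(D_i)=1$ — this is (b)$\Rightarrow$(c), and it also delivers (a)$\Rightarrow$(c). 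Conversely, for (c)$\Rightarrow$(a), if every component is an isolated vertex or a cycle and $D$ is not acyclic, then every induced strongly connected subdigraph of $D$ lies inside a single component, hence is either a single vertex or one of the component cycles; so the attained spectral radii are exactly $0$ and $1$, using Proposition~\ref{prop:strong} to assemble the component spectra, and $\Pi(D)=\{0,1\}$.

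The only place where genuine care is needed is the lemma: ruling out strongly connected digraphs other than $\vec{C}_k$ with spectral radius exactly $1$. I expect the "shortest cycle is chordless" observation, combined with the strict monotonicity of the Perron root on proper subdigraphs of a strongly connected digraph (Lemma~\ref{lem:sub}), to settle it cleanly as sketched; the remaining implications are bookkeeping around Theorem~\ref{compl_spect_induced_subdigraphs}.
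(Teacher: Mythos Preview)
The paper does not actually prove Theorem~\ref{tm:char}; it is quoted from \cite{flor} as a known result, so there is no ``paper's own proof'' to compare against. Your argument is nonetheless a correct, self-contained proof using only the tools already assembled in Section~\ref{sec:preliminares}.

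Your isolated lemma---that a strongly connected $H$ has $\rho(H)=1$ iff $H\cong\vec{C}_k$---is the right pivot, and the shortest-cycle argument works: any extra arc on the vertex set of a shortest directed cycle closes a strictly shorter cycle (check both ``forward'' and ``backward'' chords), so a shortest cycle is induced; then Lemma~\ref{lem:sub} gives the strict inequality $1=\rho(C)<\rho(H)$ unless $H=C$. The only cosmetic point is that if $V(C)=V(H)$ then ``$C$ is induced'' already forces $C=H$, so the dichotomy ``$H=C$ or $C$ is a proper induced subdigraph'' is clean. The remaining implications are, as you say, bookkeeping around Theorem~\ref{compl_spect_induced_subdigraphs}: in (2)(b)$\Rightarrow$(c) you use that each strongly connected component is itself an induced strongly connected subdigraph, hence its spectral radius lies in $\Pi(D)$; in (2)(c)$\Rightarrow$(a) either your direct argument (any induced strongly connected subdigraph of a cycle is a vertex or the whole cycle) or the appeal to Proposition~\ref{prop:strong} suffices. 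No gaps.
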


Here, $\#\Pi(D)$ denotes the cardinality of $\Pi(D)$. {The result of Theorem~\ref{tm:char} shows that the }  digraphs with {one or two} complementarity {eigenvalues} were completely {determined in \cite{flor}}. 
From Theorem  \ref{tm:char} we conclude that 0 and 1 are always complementarity eigenvalues of any digraphs that has a cycle as a subdigraph.

{The set of all strongly connected digraphs with exactly $t$ elements in their complementarity spectrum will be denoted $\mathcal{SCD}_t$. It is easy to see that  $\mathcal{SCD}_1$ contains only one digraph consisting of an isolated vertex and $\mathcal{SCD}_2$ contains only cycles (of any size).}

{In the following sections we will precisely describe the types of digraphs in $\mathcal{SCD}_3$. To start with, we present a structural characterization of them}.
\begin{theorem}
\label{caracterizacion}
{$\mathcal{SCD}_3$ is the set of strongly connected digraphs whose only proper induced strongly connected subdigraphs are isolated vertices and cycles.}

\end{theorem}
\begin{proof}
Let $D$ be a digraph in $\mathcal{SCD}_3$. We show that all induced strongly connected proper subdigraphs of $D$ are either isolated vertices or cycles. We first notice that, by Theorem~\ref{tm:char}(1), $D$ needs to have a cycle as a subdigraph, hence $1 \in \Pi(D)$. Now let $D'$ be an {induced strongly connected} proper subdigraph of $D$; If $D'$ is neither a cycle nor an isolated vertex then, using Perron-Frobenius Theorem we have that \[1<\rho(D')<\rho(D),\] and therefore $\#\Pi(D)\geq 4$ which contradicts the cardinality of the complementarity spectrum.\\
Reciprocally, let $D$ be a strongly connected digraph whose only proper induced strongly connected subdigraphs are cycles and isolated vertices. Then we observe that $0,1 \in\Pi(D)$. {Given that $D$ is not a cycle, by Lemma \ref{lem:sub} we have that $1<\rho(D)$, hence $\Pi(D)=\{0,1, \rho(D)\}$ and $D$ belongs to $\mathcal{SCD}_3$.}\\
\end{proof}
The remainder of the paper is devoted to detect all the digraphs in $\mathcal{SCD}_3$.


\section{Families of digraphs in $\mathcal{SCD}_3$}\label{sec:familes}


Families in Section 3.1 and their digraphs are called basic, since, as it will be proved later on, basic digraphs are subdigraphs of any strongly connected noncycle digraph, and all digraphs in $\mathcal{SCD}_3$ are obtained by suitably adding arcs to a basic
digraph.

\subsection{Basic Families}\label{sec:basic_families}

We  first present our basic {types} of 
digraphs 
in {$\mathcal{SCD}_3$} which we call $\infty$-digraph and $\theta$-digraph.\\

\noindent \textbf{$\infty$-digraph (coalescence of cycles)}

The $\infty$-digraph $\infty=\infty(r,s)$ is the coalescence of two cycles $\vec{C_r}$ and $\vec{C_s}$.
It is easy to see that the only strongly connected induced subdigraphs are the cycles $\vec{C_r}$ and $\vec{C_s}$, besides the digraph $\infty$ itself and isolated vertices. Therefore, by virtue of Theorem \ref{compl_spect_induced_subdigraphs}, the complementarity spectrum can be computed by means of the spectral radii of these induced subdigraphs:
\[\Pi(\infty)=\{0,1,\rho(\infty)\}.\]
Figure \ref{fig:infinito} shows one example of a digraph in this family and a schematic representation.

\begin{figure}[h!]
\centering
\includegraphics[scale=0.18]{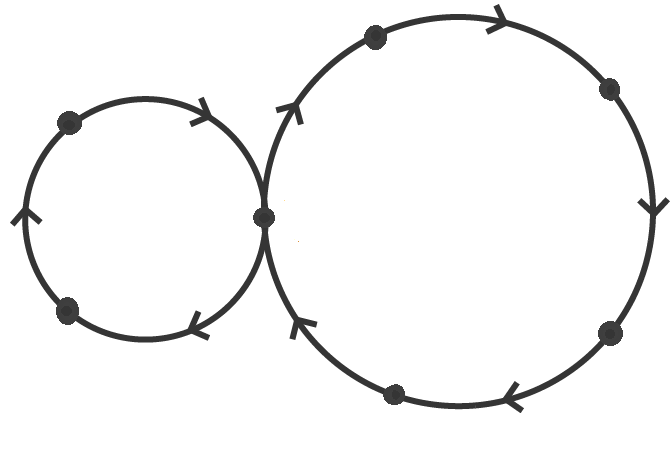}
\hspace{0.5cm}
\includegraphics[scale=0.18]{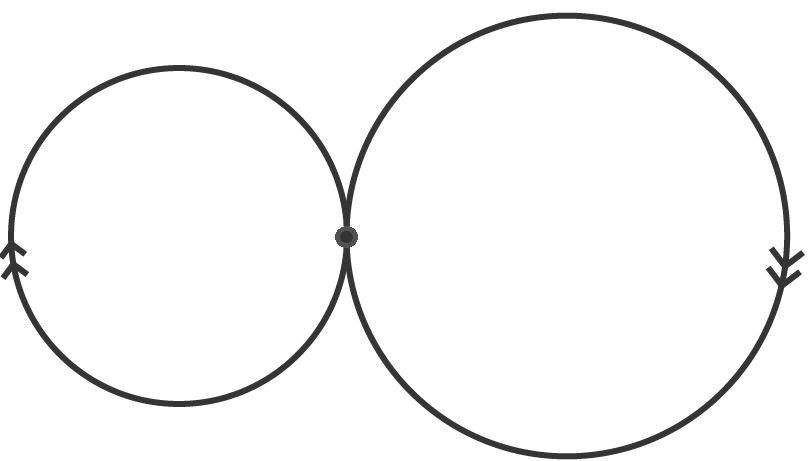}
\caption{Digraph $\infty(3,5)$ and a schematic representation of the same digraph.}
\label{fig:infinito}
\end{figure}

In the figures throughout this manuscript, we will represent digraphs using the following convention: a single arrow between two vertices indicates one arc joining them, while a double arrow indicates that there may be other vertices in the path joining them.\\


\noindent \textbf{$\theta$-digraph.}

The $\theta$-digraph \cite{Lin2012,flor}  consists of three directed paths $\vec{P}_{a+2}, \vec{P}_{b+2}, \vec{P}_{c+2}$ such that the initial vertex of $\vec{P}_{a+2}$ and $\vec{P}_{b+2}$ is the terminal vertex of $\vec{P}_{c+2}$, and the initial vertex of  $\vec{P}_{c+2}$ is the terminal vertex of $\vec{P}_{a+2}$ and $\vec{P}_{b+2}$, as shown in Figure \ref{fig:prohibido}. It will be denoted by $\theta(a, b, c)$ or simply by $\theta$.
\begin{figure}[h!]
\centering
\includegraphics[scale=0.15]{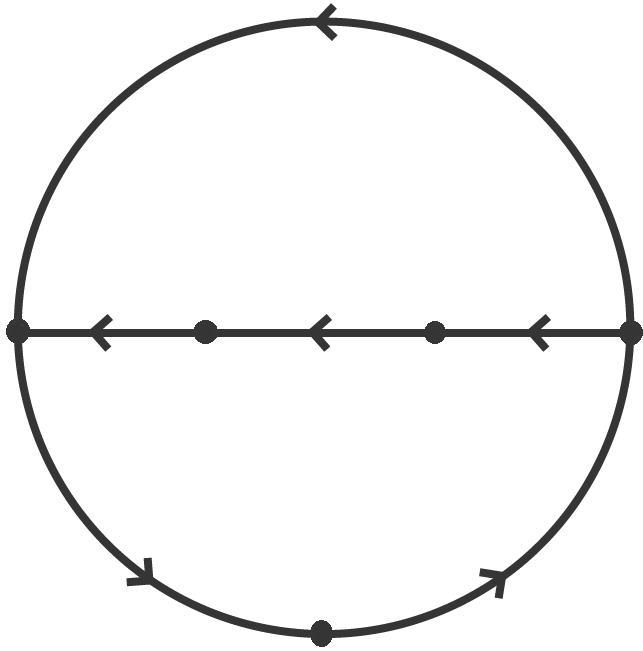}
\hspace{1cm}
\includegraphics[scale=0.15]{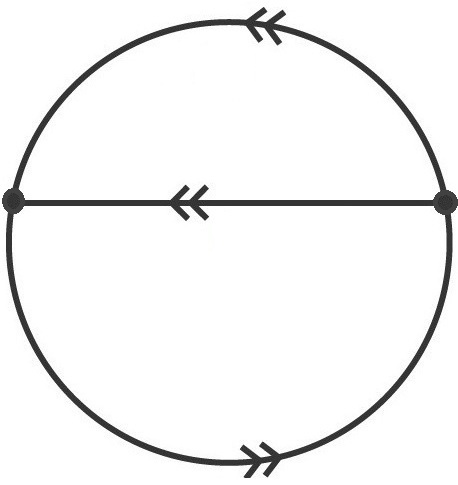}
\caption{Digraph $\theta(0,2,1)$ and a schematic representation of the same digraph.}
\label{fig:prohibido}
\end{figure}

Since ${\theta}(a,b,c)$ and ${\theta}(b,a,c)$ are isomorphic and we are not considering digraphs with multiple arcs, we can assume $a \leq b$ and $b > 0$,  without loss of generality.

The digraph $\theta(a,b,c)$ has $n=a+b+c+2$ vertices an the only strongly connected induced subdigraphs are the cycles $\vec{C_r}$ and $\vec{C_s}$ (where $r=a+c+2$ and $s=b+c+2$), in addition to the digraph $\theta$ itself, and isolated vertices. Therefore, we have,
\[\Pi({\theta})=\{0,1,\rho({\theta})\}.\]

{Note that an equivalent construction of the $\theta$-digraph can be made by taking a cycle, and adding a simple path joining two different vertices.}

These two {types}: $\infty$ and $\theta$ digraphs, are the only strongly connected bicyclic digraphs \cite{Lin2012}. Every  {digraph in $\mathcal{SCD}_{t}$ with $t\geq 3$ contains one of these two types of digraphs as a subdigraph, as the next Proposition shows}.

\begin{prop} \label{subdigrafosInftyTheta}
If $D$ is a strongly connected digraph different from a cycle {and an isolated vertex  (i.e. $D$ belongs to $\mathcal{SCD}_t$ for some $t\geq 3$)}, then it has an $\infty$-subdigraph or a $\theta$-subdigraph.
\end{prop}
\begin{proof}
Since $D$ is strongly connected, it has a cycle $\vec{C_r}$ as a subdigraph, for some integer $r$. Since $D$ is not a cycle, there are vertices $x,y$ in this cycle and a non-trivial path from $x$ to $y$, and not all the arcs belongs to $\vec{C_r}$. If $x=y$, then $D$ has an infinity digraph as a subdigraph. If $x \neq y$ then $D$ has a $\theta$ digraph as a subdigraph. The fact that $\#\Pi(D) > 1$ follows from item (1) of Theorem~\ref{tm:char}.
\end{proof}

\subsection{Digraphs with an $\infty$-subdigraph}
\label{sec:digrafos_con_infinito}

Let us now study in which ways we can modify {the $\infty$-digraph}, maintaining the number of complementarity eigenvalues. We first present two examples of digraphs in $ \mathcal{SCD}_3$, containing the $\infty$-digraph as a subdigraph. {In the next section we show that they are the only ones with this property.}

For simplicity, we will refer to vertices in $\vec{C}_r$ as $1,2, \hdots, r$, and to vertices in $\vec{C}_s$ as $1',2', \hdots, s'$, identifying $1$ with $1'$.\\

\textbf{Type 1 digraph}

Consider the digraph $D_1(r,s)=\infty(r,s)\cup\{e\}$, where $e$ denotes the arc connecting the vertex $r$ with the vertex $2'$, as shown in Figure \ref{fig_tipo1and2} (left). The only strongly connected induced subdigraphs of $D_1$ are both cycles $\vec{C_r}$ and $\vec{C_s}$, besides the digraph $D_1$ itself and isolated vertices. Then, we have
\[\Pi(D_1)=\{0,1,\rho(D_1)\}.\]

Observe that in this case the added arc distinguishes both cycles, and therefore we cannot assume $r\leq s$ {like in the previous example}. In other words, it is not the same to add an arc from the smaller cycle to the larger one, than the other way around.

\begin{figure}[h!]
\centering
\includegraphics[scale=0.15]{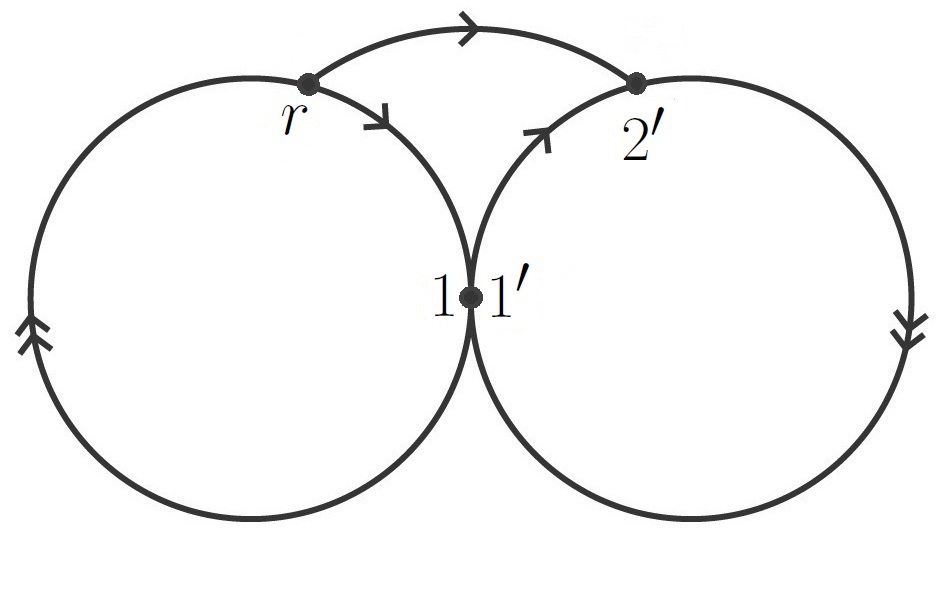}
\hspace{.5cm}
\includegraphics[scale=0.15]{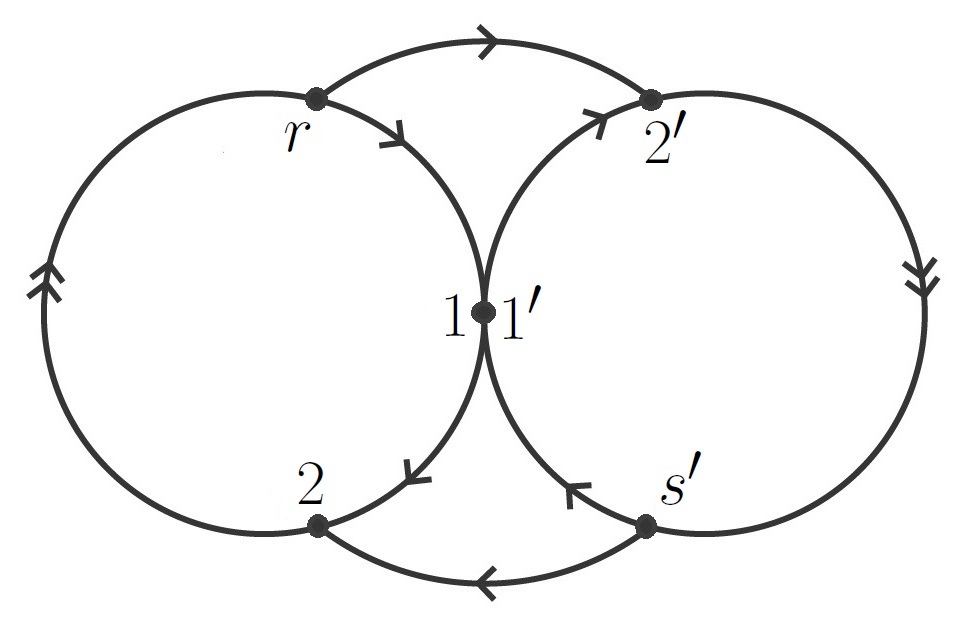}
\caption{Type 1 digraph (left) and Type 2 digraph (right)}.
\label{fig_tipo1and2}
\end{figure}


\textbf{Type 2 digraph}

Let $D_2$ be the digraph $D_2(r,s)=\infty(r,s)\cup\{e,\tilde{e}\}$, where $e=(r,2')$ and $\tilde{e}=(s',2)$, as shown in Figure \ref{fig_tipo1and2} (right). It is still true that the only strongly connected induced subdigraphs are cycles $\vec{C_r}$, $\vec{C_s}$ and $\vec{C}_{r+s-2}$ {(obtained by removing vertex $1=1'$)}, besides the digraph $D_2$ itself and isolated vertices. Then, we have
\[\Pi(D_2)=\{0,1,\rho(D_2)\}.\]

{Observe that these two types of digraphs, not only have $\infty$-subdigraphs, but also $\theta$-subdigraphs. For instance, for the Type 1 digraph, by removing the $(r,1)$ arc from $D_1$ we obtain a $\theta$-digraph, specifically a $\theta(0,r-1,s-2)$.}

\begin{obs}\label{rem:dcs}
The three examples presented above have three complementarity eigenvalues, namely, zero, one, and the spectral radius of the digraph itself. A direct application of Perron-Frobenius Theorem, gives us an important observation: if we consider the three digraphs mentioned (all with $n$ vertices), then, by Lemma \ref{lem:sub}, $\rho(\infty)<\rho(D_1)<\rho(D_2)$. In particular, these digraphs are not isomorphic to each other.
\end{obs}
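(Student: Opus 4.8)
The plan is to split the assertion of Remark~\ref{rem:dcs} into three claims and dispatch each in turn: (i) each of $\infty(r,s)$, $D_1(r,s)$, $D_2(r,s)$ has complementarity spectrum $\{0,1,\rho(\cdot)\}$ with $\rho(\cdot)>1$, so each really has exactly three complementarity eigenvalues; (ii) when the three digraphs are taken on the common vertex set, $\rho(\infty)<\rho(D_1)<\rho(D_2)$; and (iii) consequently the three digraphs are pairwise non-isomorphic.

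Claim (i) is exactly what was recorded when the three types were introduced: by Theorem~\ref{compl_spect_induced_subdigraphs}, $\Pi(D)$ is the set of spectral radii of the induced strongly connected subdigraphs of $D$, and for each of the three digraphs those subdigraphs are precisely the two constituent cycles $\vec{C}_r$ and $\vec{C}_s$ (each of spectral radius $1$), the isolated vertices (spectral radius $0$), and the whole digraph. Hence $\Pi(D)=\{0,1,\rho(D)\}$ in each case; since none of the three digraphs is a cycle, Theorem~\ref{tm:char}(2) together with the Perron--Frobenius Theorem gives $\rho(D)>1$, so $\#\Pi(D)=3$.

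For claim (ii), the point is that, with the labelling fixed, the three digraphs share a common vertex set and their arc sets are strictly nested: $\infty(r,s)$ is obtained from $D_1(r,s)$ by deleting the arc $e=(r,2')$, and $D_1(r,s)$ is obtained from $D_2(r,s)$ by deleting the arc $\tilde e=(s',2)$. Moreover $D_1$ and $D_2$ are strongly connected, since adjoining arcs to the strongly connected digraph $\infty$ cannot destroy strong connectivity. Therefore $\infty$ is a proper subdigraph of the strongly connected digraph $D_1$, and $D_1$ is a proper subdigraph of the strongly connected digraph $D_2$; two applications of Lemma~\ref{lem:sub} give $\rho(\infty)<\rho(D_1)<\rho(D_2)$.

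Claim (iii) is then immediate: isomorphic digraphs have permutation-similar adjacency matrices, hence equal spectral radius, so three digraphs with pairwise distinct spectral radii cannot be pairwise isomorphic. I do not anticipate any genuine difficulty in this argument; the only step that deserves a second look is checking that the hypotheses of Lemma~\ref{lem:sub} are actually met --- that the smaller digraph sits inside the larger one as a proper subdigraph on the \emph{same} vertex set, and that the larger digraph is strongly connected --- and this verification is routine.
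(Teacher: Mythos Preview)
Your argument is correct and follows exactly the paper's own reasoning: the remark itself is justified in the text by invoking Lemma~\ref{lem:sub} on the chain of proper inclusions $\infty\subsetneq D_1\subsetneq D_2$ of strongly connected digraphs on the same vertex set. Your write-up simply makes explicit the routine verifications (the nesting of arc sets, strong connectivity of $D_1$ and $D_2$, and that equal spectral radius is an isomorphism invariant) that the paper leaves implicit.
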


\subsection{Digraphs with a $\theta$-subdigraph, and without an $\infty$-subdigraph.}
\label{sec:digrafos_con_teta}

We now describe three types of digraphs in $\mathcal{SCD}_3$
without $\infty$-subdigraphs. In the next section we prove that, up to isomorphism, they are the only ones with that property.\\


\noindent \textbf{Type 3 digraph}

Let $D_3$ be the digraph $D_3=\vec{C_n}\cup\{e,\tilde{e}\}$ where $e=(1,i)$ and $\tilde{e}=(i-1,j)$ with $2<i<j\leq n$, as shown in the left Figure \ref{fig:tipo345}.

The only strongly connected induced subdigraphs {of $D_3$} are the cycles $\vec{C}_{n-(i-2)}$, $\vec{C}_{n-(j-i)}$ besides the digraph $D_3$ and isolated vertices. Then, we have
\[\Pi(D_3)=\{0,1,\rho(D_3)\}.\]

\noindent \textbf{Type 4 digraph}\\
Let {$k$} be a positive integer larger than $1$. Consider the digraph $D_4=\vec{C_n}\cup\{e_1,e_2, \hdots, e_k\}$ where $e_i=(x_i,y_i)$ with $1<y_i<x_i<y_{i+1}<x_{i+1}\leq n$ for all $i=1, \hdots, k-1$. {Observe that the conditon $y_i<x_i$ prevents the appearence of an $\infty$-subdigraph.} Figure \ref{fig:tipo345} (center) shows an example of such a digraph {with $k=3$ added arcs, generating the three cycles $\vec{C}_{r_1}, \vec{C}_{r_2}$ and $\vec{C}_{r_3}$.}

The only strongly connected induced subdigraphs of $D_4$ are the cycles $\vec{C}_{r_1},\hdots,\vec{C}_{r_k}$  (where $C_{r_i}$ is the digraph induced {by the vertices in-between} $y_i$ and $x_i$ for all $i=1, \hdots, k$) besides the digraph $D_4$ and isolated vertices. Then, we have
\[\Pi(D_4)=\{0,1,\rho(D_4)\}.\]

\begin{figure}[h!]
\centering
\includegraphics[scale=0.15]{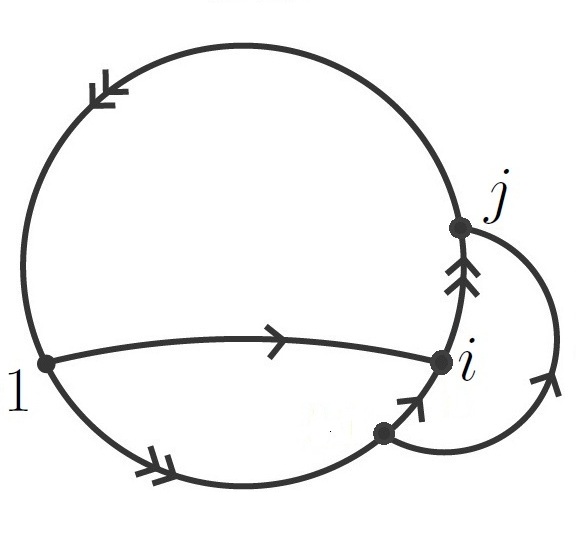}
\hspace{.5cm}
\includegraphics[scale=0.12]{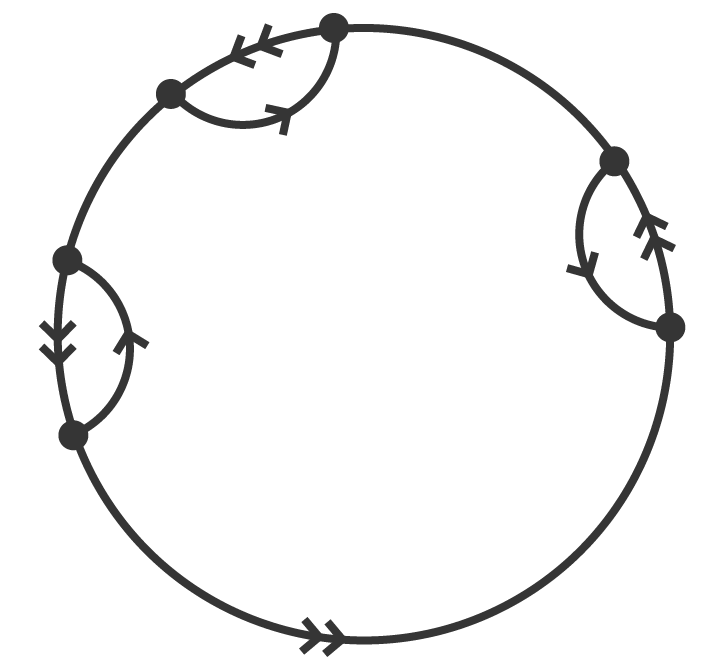}
\hspace{.5cm}
\includegraphics[scale=0.12]{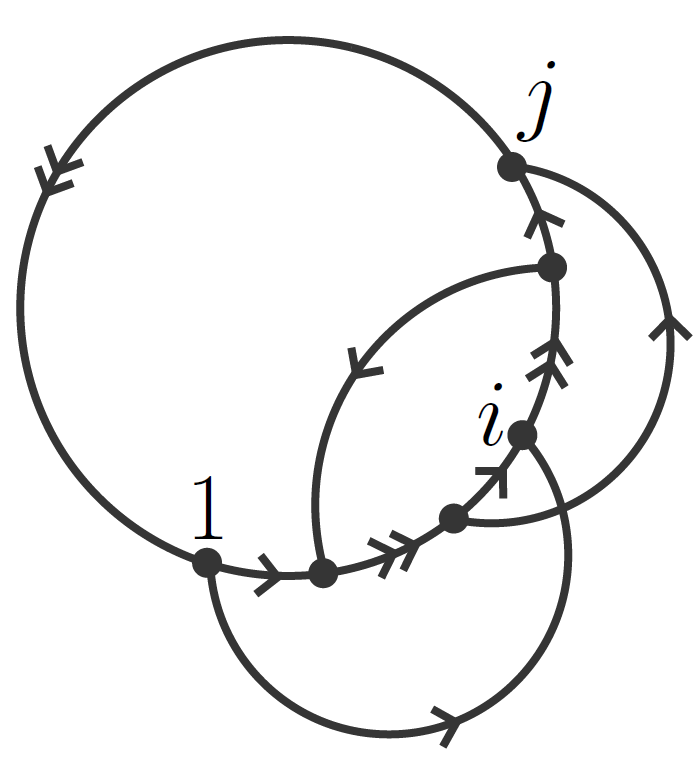}
\caption{Type 3 digraph (left), Type 4 digraph (center) and Type 5 digraph (right). {Observe that one $\theta$-subdigraph can be identified in these three examples by taking the larger (round) cycle in each case, and any other arc.}}
\label{fig:tipo345}
\end{figure}


\noindent \textbf{Type 5 digraph}

Consider the digraph $D_5=\vec{C_n}\cup\{e,e',e''\}$ where $e=(1,i)$, $e'=(i-1,j)$ and $e''=(j-1,2)$ with $3<i<i+1<j\leq n$. {In this case, condition $3<i$ as much as condition $i+1<j$ prevents the appearence of an $\infty$-subdigraph.} Figure \ref{fig:tipo345}(right) shows an example of a digraph in this family.

The only strongly connected induced subdigraphs {of $D_5$} are the cycles $\vec{C}_{n-(i-2)}$, $\vec{C}_{n-(j-i)}$ and $\vec{C}_{j-2}$ besides the digraph $D_5$ and isolated vertices. Then, we have
\[\Pi(D_5)=\{0,1,\rho(D_5)\}.\]

Note that the three types of digraphs defined above, as well as the $\theta$ digraph,{ do} not have an infinity digraph as a subdigraph.

{It is easy to see that these seven types of digraphs presented above are non-isomorphic to each other.}

\section{Characterization of $\mathcal{SCD}_3$}
\label{sec:char}

{Let $F_{\infty}$ be the set of digraphs which are either $\infty$-digraphs or Type $i$ digraphs with $i \in \{1, 2\}$, and let $F_{\theta}$ be the set of digraphs which are either $\theta$-digraphs or Type $j$ digraphs with $j \in \{3, 4, 5\}$. We refer to $F_{\infty}$ (resp.  $F_{\theta}$) as the $\infty$-Family (resp. the $\theta$-Family). In this section, we will prove that $\mathcal{SCD}_3 = F_{\infty} \cup F_{\theta}.$}


\subsection{$\infty$-Family characterization}
\label{subsec:infity}
We first show that the only digraphs in $\mathcal{SCD}_3$ having an $\infty$-subdigraph belong to $F_{\infty}$.

\begin{theorem}
 \label{subdigrafoocho}
Let $D$ be a digraph in  $\mathcal{SCD}_3$. If $D$ contains an $\infty$-subdigraph, then $D$ belongs to the $\infty$-Family. Precisely, $D$ is either an $\infty$-digraph, a Type 1 digraph or a Type 2 digraph.
\end{theorem}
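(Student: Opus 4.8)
The plan is to use the structural characterization of Theorem~\ref{caracterizacion}: since $D$ has three complementarity eigenvalues, every induced strongly connected proper subdigraph of $D$ is either an isolated vertex or a cycle. I will fix an $\infty$-subdigraph $\infty(r,s)$ inside $D$, consisting of two cycles $\vec{C}_r$ and $\vec{C}_s$ sharing exactly the vertex $1=1'$, and show that the only arcs $D$ can have beyond those of $\infty(r,s)$ are (at most) the two arcs described in the Type 1 and Type 2 definitions.

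\medskip

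First I would argue that the vertex set of $D$ is exactly $V(\vec{C}_r)\cup V(\vec{C}_s)$. Indeed, if there were any other vertex, then since $D$ is strongly connected there would be a shortest path into the $\infty$-subdigraph and a shortest path out; stitching these together with a portion of $\infty(r,s)$ produces an induced strongly connected subdigraph that is neither a cycle nor an isolated vertex and is proper (it misses, say, an arc internal to one of the two cycles), contradicting Theorem~\ref{caracterizacion}. A cleaner way to phrase this: any induced strongly connected subdigraph on more than the vertices of a single cycle must, by the characterization, be all of $D$, so in particular the induced subdigraph on $V(\vec{C}_r)\cup V(\vec{C}_s)$ — which already contains the strongly connected $\infty$-subdigraph — must equal $D$ unless it is a cycle, which it is not. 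Hence $V(D)=\{1,2,\hdots,r\}\cup\{2',\hdots,s'\}$ and every vertex other than $1$ has the obvious unique neighbors from the two cycles.

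\medskip

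Next I would classify the ``extra'' arcs, i.e. arcs of $D$ not in $\infty(r,s)$. Using Theorem~\ref{caracterizacion} again, for any extra arc $e=(u,v)$ the induced strongly connected subdigraph $H$ one obtains by taking $\infty(r,s)\cup\{e\}$ and restricting to the minimal strongly connected piece through $e$ must be a cycle — unless it is all of $D$. Since $\infty(r,s)$ already forces two distinct cycles through any reasonably placed chord, one checks that a single extra arc $e=(u,v)$ is only admissible when the vertices outside the $u\to v$ path along the cycles can be ``absorbed'' — this pins $e$ down to be an arc from the last vertex of one cycle to the second vertex of the other, exactly the arc $(r,2')$ (after possibly swapping the roles of the two cycles). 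This is the Type 1 configuration. If $D$ has two or more extra arcs, I would show that the only way to avoid creating a forbidden induced subdigraph (a strongly connected induced proper subdigraph that is not a cycle, e.g. a smaller $\infty$ or $\theta$) is that there are exactly two extra arcs, going in opposite directions between the two cycles, in the precise positions $(r,2')$ and $(s',2)$: this is Type 2. Three or more extra arcs, or any extra arc in a ``wrong'' position, will always yield a proper induced subdigraph that is strongly connected but not a cycle — one exhibits it explicitly by choosing an appropriate vertex subset — contradicting the three-eigenvalue hypothesis.

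\medskip

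The main obstacle I expect is the bookkeeping in the last step: enumerating the possible positions and numbers of extra arcs and, for each illegal configuration, producing a concrete induced strongly connected proper subdigraph that is neither a cycle nor a vertex. The key recurring tool is Lemma~\ref{lem:sub} together with Theorem~\ref{caracterizacion}: a proper subdigraph has strictly smaller spectral radius, and any induced strongly connected proper subdigraph that contains more than one cycle (hence has spectral radius $>1$ and $<\rho(D)$) would force a fourth complementarity eigenvalue. So the whole argument reduces to: ``$D$ minus any one of its arcs, restricted to its strongly connected part, must be a cycle.'' I would organize the proof around systematically applying this principle to the arc $(r,1)$ (or $(1,2)$, etc.), which reveals a $\theta$-subdigraph, and then showing the $\theta$-analysis (carried out in the companion Theorem of the next subsection, or redone here) leaves no room for anything beyond Type 1 and Type 2 once an $\infty$-subdigraph is present. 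The endpoint-labelling argument — that an extra chord must land on the ``second'' vertex $2'$ and leave from the ``last'' vertex $r$ — is where I would be most careful, since this is precisely what distinguishes $D_1$ and $D_2$ from all the spurious near-misses.
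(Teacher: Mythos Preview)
Your overall strategy and your first step agree with the paper: you correctly use Theorem~\ref{caracterizacion} to conclude that the induced subdigraph on $V(\vec{C}_r)\cup V(\vec{C}_s)$, being strongly connected and not a cycle, must equal $D$, so $V(D)=V(\infty(r,s))$. That is exactly how the paper begins.

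The gap is in your second step. You repeatedly phrase the obstruction as ``$D$ minus one of its arcs, restricted to its strongly connected part, must be a cycle,'' and you propose to organize the case analysis around deleting arcs like $(r,1)$. But Theorem~\ref{caracterizacion} says nothing about arc-deleted subdigraphs; it constrains only \emph{induced} subdigraphs, which are determined by vertex subsets. Deleting an arc does not produce an induced subdigraph, and the resulting $\theta$-digraph you obtain is not induced in $D$ (the arc you deleted is still there in the induced picture). For the same reason, invoking the $\theta$-analysis of the next subsection here would be both circular and inapplicable. What the paper does instead is always remove a \emph{vertex} and look at the induced subdigraph on the remaining vertices: an extra arc inside $\vec{C}_r$ is ruled out because the subdigraph induced on $V(\vec{C}_r)$ is then strongly connected and not a cycle; an arc $(x,y)$ from $\vec{C}_r$ to $\vec{C}_s$ with $x\neq r$ is ruled out because the subdigraph induced on $V(D)\setminus\{r\}$ is strongly connected and not a cycle; and similarly $y=2'$ is forced by deleting the vertex $2'$. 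This vertex-deletion bookkeeping is precisely the ``endpoint-labelling argument'' you flag as delicate, and it is short and completely concrete once you use the right tool. Your sketch, as written, never produces a single forbidden induced subdigraph and so does not actually pin down the arcs $(r,2')$ and $(s',2)$.
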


\begin{proof}
Let us first suppose that there is a vertex $v$ in $D$ outside $\infty(r,s)$ (i.e. there are more vertices than those of the coalescence of the cycles). {Let $D'$ be the induced digraph obtained from $D$ by removing $v$. We notice that $D'$ contains an induced strongly connected proper subdigraph different from a cycle, which is $\infty(r,s)$ itself. This contradicts Theorem~\ref{caracterizacion}.}

We have then that the vertices in $D$ are exactly the ones of $\infty(r,s)$.
Let us analyze which arcs in $D$ are not in $\infty(r,s)$.

We recall that $\infty(r,s)=\vec{C_r}\cdot \vec{C_s}$. We first suppose that there is an arc between vertices of $\vec{C_r}$. Considering the digraph ${D'}$ generated by the vertices of $\vec{C_r}$, {then $D'$ is an induced strongly connected proper subdigraph different from a cycle or an isolated vertex, which contradicts Theorem \ref{caracterizacion}.}
{Figure~\ref{arcs_between_Cr} illustrates all possible arcs within $\vec{C}_r$ and an obtained subdigraph $D'$ in red that is not a cycle.}
\begin{figure}[h!]
\centering
\includegraphics[scale=0.12]{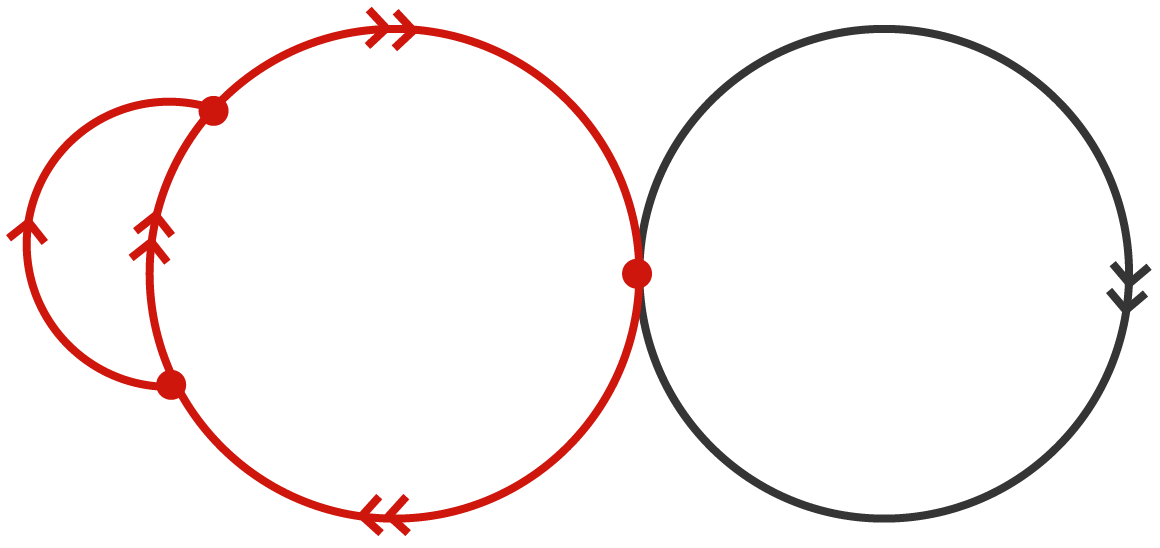}
\hspace{.5cm}
\includegraphics[scale=0.12]{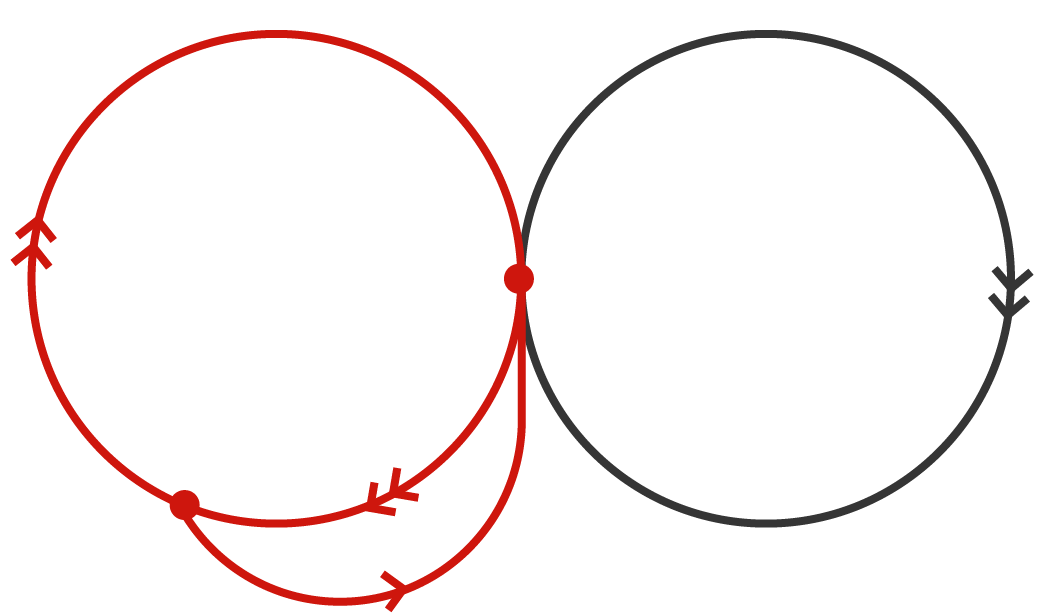}

\vspace{.5cm}
\includegraphics[scale=0.12]{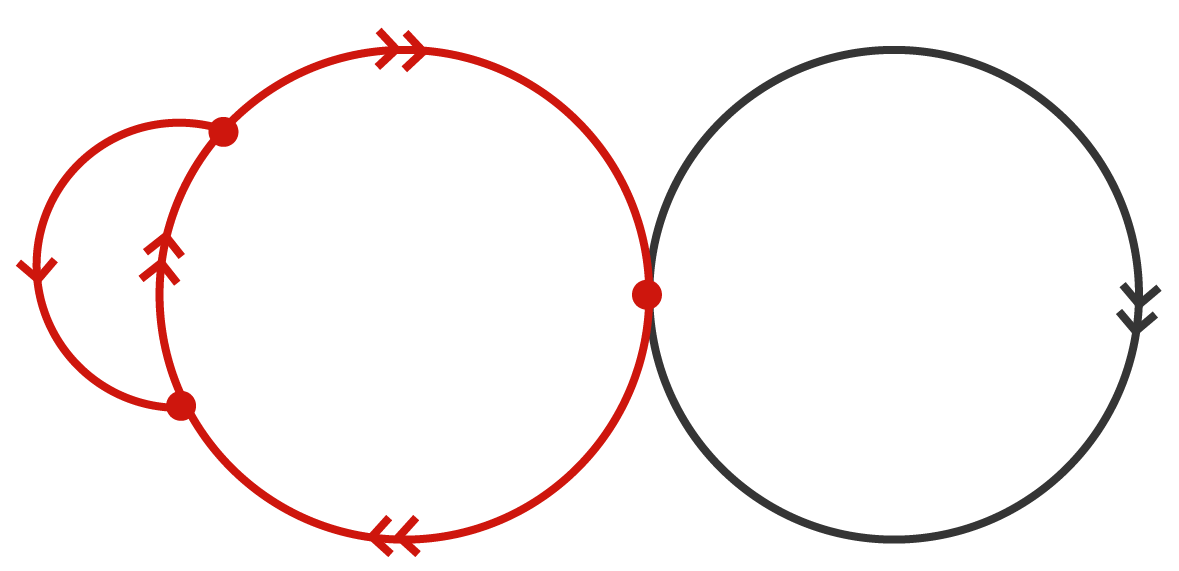}
\hspace{.5cm}
\includegraphics[scale=0.12]{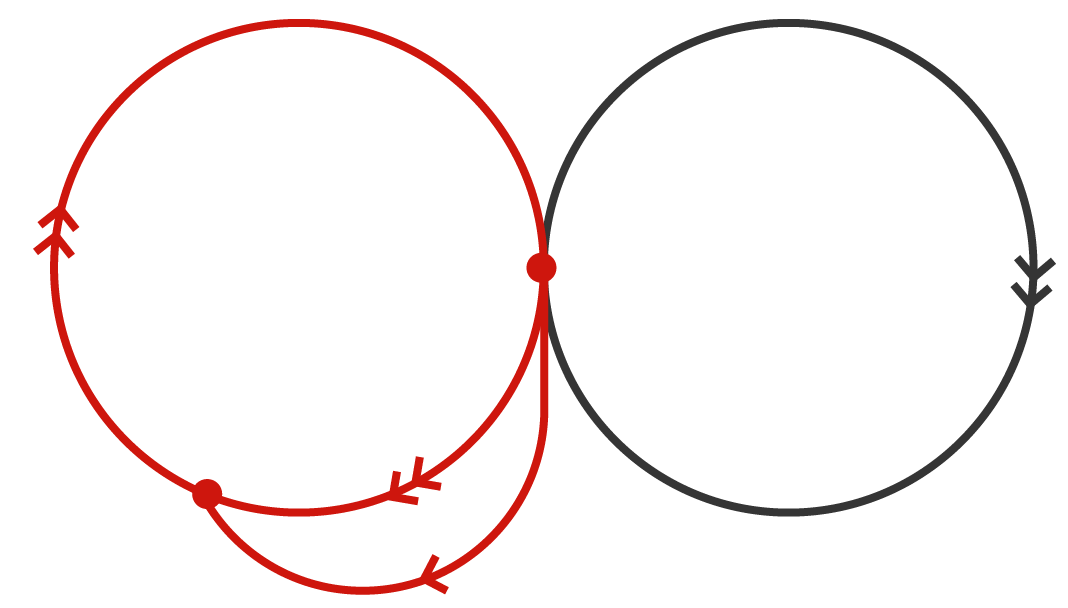}
\caption{A schematic representation of all possible cases of added arcs $(x,y)$ with $x,y \in V(\vec{C}_r)$.}
\label{arcs_between_Cr}
\end{figure}

Of course with the same argument we can rule out arcs between vertices of $\vec{C_s}$.
Therefore, we can only add arcs with one vertex in each cycle.

Let us now suppose that there is an arc $(x,y)$ with $x \in V(\vec{C_r})$ and $y \in V(\vec{C_s})$.

We will first see that, in order to maintain the cardinality of the complementarity spectrum, the arc must start from the vertex $r$ of $\vec{C_r}$.

Indeed, if $x\neq r$, we can consider the digraph $D'$ generated by the vertices of $D$ except $r$ (see the left of Figure~\ref{digraph_without_r}). Since $D'$ is an induced strongly connected proper subdigraph of $D$, {which contradicts Theorem \ref{caracterizacion}.}
Therefore the only way to keep three complementarity eigenvalues is to have $x=r$.
\begin{figure}[h!]
\centering
\includegraphics[scale=0.1]{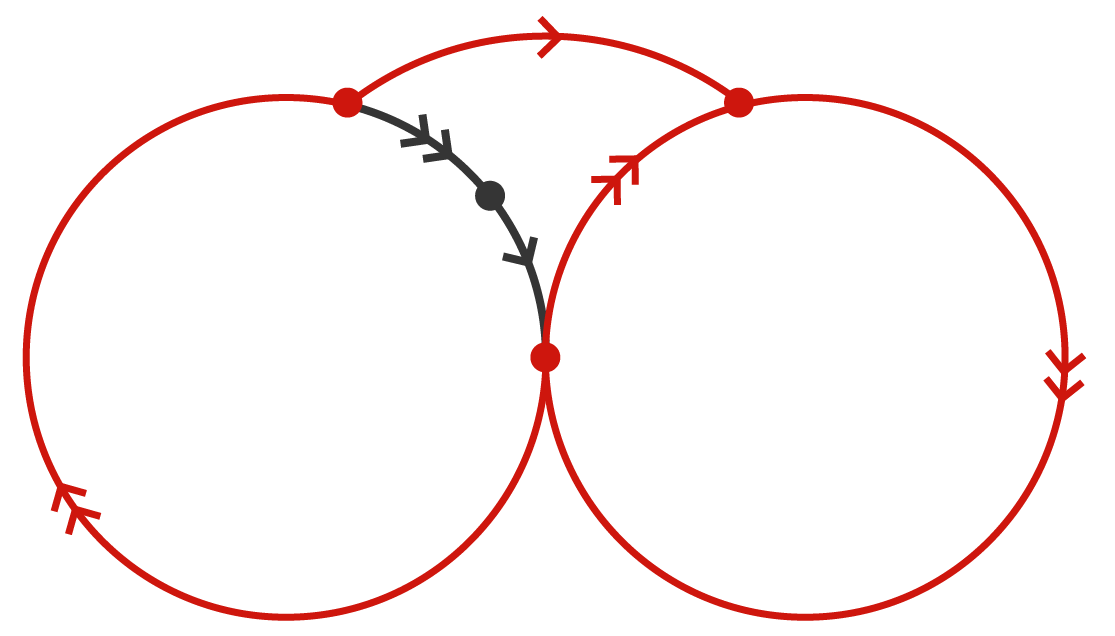}
\hspace{.5cm}
\includegraphics[scale=0.1]{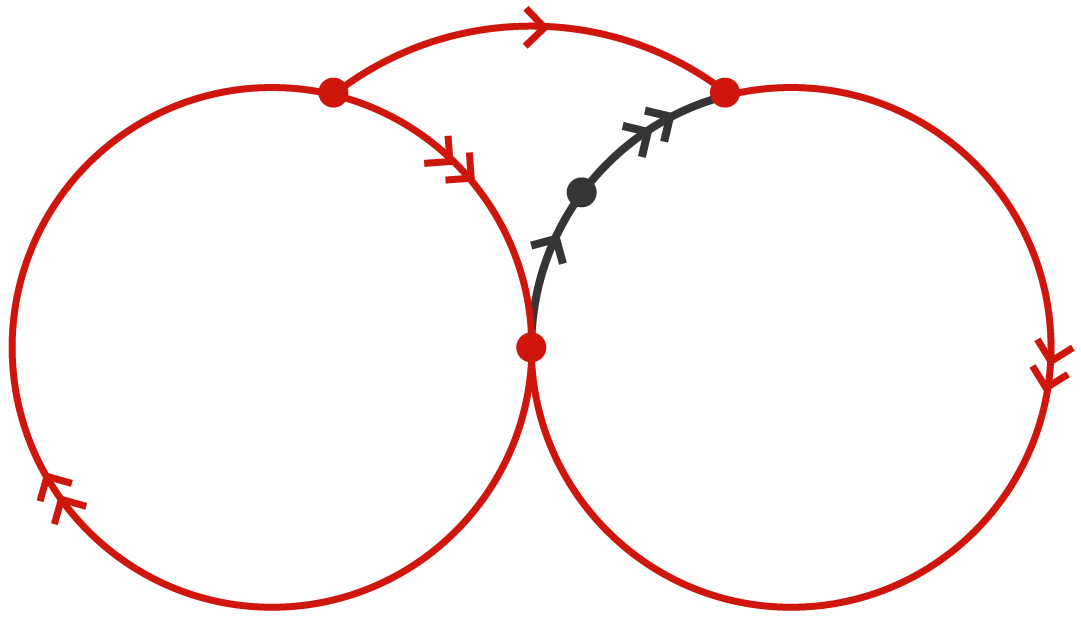}
\caption{Added arc $(x,y)$ with $x \in\vec{C_r}$, $y\in\vec{C_s}$ and $x\neq r$ (left). $(x,y)$ with $x \in\vec{C_r}$, $y\in\vec{C_s}$ and $y \neq 2'$ (right). }
\label{digraph_without_r}
\end{figure}

With the same arguments we can see that the arc has to end in the second vertex of $\vec{C_s}$, this is, $y=2'$ (see the right of Figure \ref{digraph_without_r}).

Then, the only arc that we can add from $\vec{C_r}$ to $\vec{C_s}$ maintaining three complementarity eigenvalues is $(x,y)=(r,2')$.

Analogously, we have that the only possible arc $(x,y)$ with $x \in V(\vec{C_s})$ and $y \in V(\vec{C_r})$ is $(s',2)$.

We have then three possibilities:
\begin{itemize}
\item we add no arc to the coalescence of the two cycles,
\item we add either $(r,2')$ or $(s',2)$, so we end up with a digraph of Type 1,
\item we add both arcs $(r,2')$ and $(s',2)$, so we end up with a digraph of Type 2,
\end{itemize}
which concludes the proof.
\end{proof}

\begin{cor}
Let $D$  be a digraph in  $\mathcal{SCD}_3$ with $n$ vertices. If $D$ contains an  $\infty (r,s)$-subdigraph, then $n=r+s-1$.
\end{cor}

\subsection{$\theta$-Family characterization}
\label{subsec:theta}

As we did in Subsection \ref{subsec:infity} with the $\infty$-Family, we will now show that the digraphs in $\theta$-Family presented above are all the possible digraphs in $\mathcal{SCD}_3$ {containing a $\theta$-subdigraph and not an $\infty$-subdigraph.}

\begin{teo}
\label{principal}Let $D$ be a digraph in  $\mathcal{SCD}_3$, with a $\theta$-subdigraph and without an $\infty$-subdigraph, then $D$ belongs to {the $\theta$-Family. Precisely $D$ is either a} $\theta$-digraph, a Type $3$ digraph, a Type $4$ digraph, or a Type $5$ digraph.
\end{teo}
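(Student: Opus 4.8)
The plan is to mirror the strategy of Theorem~\ref{subdigrafoocho}, using Theorem~\ref{caracterizacion} as the main tool: any induced strongly connected proper subdigraph of $D$ must be a cycle or an isolated vertex, so whenever deleting a vertex or examining the subdigraph spanned by a subset of vertices yields something that is strongly connected, not a cycle, and not a single vertex, we obtain a contradiction. Fix a $\theta(a,b,c)$-subdigraph of $D$; recall (by the remark in Subsection~\ref{sec:basic_families}) that it is a cycle $\vec C_n$ with an extra simple path $P$ joining two of its vertices, creating two faces, i.e.\ the two induced cycles $\vec C_r$ and $\vec C_s$. First I would show $D$ has no vertices outside this $\theta$-subdigraph: if $v$ were such a vertex, deleting $v$ leaves an induced strongly connected proper subdigraph still containing the whole $\theta$, which is neither a cycle nor a vertex, contradicting Theorem~\ref{caracterizacion}. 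So $V(D)=V(\theta)$, and the task reduces to classifying which arcs may be added to $\theta(a,b,c)$ while (i) not creating an $\infty$-subdigraph and (ii) keeping every induced strongly connected proper subdigraph a cycle or a vertex.

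The core of the argument is a case analysis on a candidate extra arc $(x,y)$, organized by where $x$ and $y$ sit relative to the three directed paths forming the $\theta$. The key mechanism is: given an added arc, identify a vertex $w$ of $D$ whose deletion leaves a digraph that is still strongly connected (one checks there is still a directed cycle through the remaining relevant vertices and that everything reaches it) but is strictly larger than a cycle — e.g.\ it still contains a $\theta$ or an $\infty$. This forces $(x,y)$ to have both endpoints in very constrained positions; as in the $\infty$ case, one typically concludes the arc must run from the ``last'' vertex before a branch point to the ``second'' vertex after a branch point, i.e.\ it must be a chord that, together with existing paths, closes up a new small cycle without leaving any avoidable vertex that could be deleted. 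Carrying this out, the admissible single added arcs are exactly the chords $(1,i)$ and $(i-1,j)$ type arcs appearing in Types~3--5. Then I would iterate: having added one legal chord, repeat the deletion argument to see which further chords remain legal; this produces the nested/interleaving configurations of Type~4 and the specific three-arc configuration of Type~5, while the ``no $\infty$-subdigraph'' hypothesis rules out the chords that would instead land us back in the $\infty$-Family (those are precisely handled by Theorem~\ref{subdigrafoocho}). Finally one argues no legal fourth arc can be added: any additional chord either creates an $\infty$-subdigraph or leaves a deletable vertex whose removal exposes a non-cycle strongly connected induced subdigraph, so the process terminates exactly at the listed types.

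The main obstacle I expect is the bookkeeping in the multi-arc step: after adding two or three chords the set of induced cycles grows, and I must verify in each configuration both that (a) \emph{every} induced strongly connected proper subdigraph really is a cycle (so the configuration is genuinely in the $\theta$-Family, which is essentially already done in Section~\ref{sec:digrafos_con_teta}) and (b) \emph{no} other arc-addition survives the deletion test. Step (b) is where the case tree is widest — one has to consider all positions of a new chord's tail and head among the several arcs already present and, for each, exhibit the offending deletable vertex or the created $\infty$. A clean way to control this is to note that, by Theorem~\ref{caracterizacion}, $D$ minus any vertex must be a cycle or have all strongly connected components being cycles/vertices, which is an extremely rigid condition; phrasing each case as ``pick the vertex that is ``internal'' to one of the branch paths and not needed to reach the new chord, delete it, and observe the rest is still strongly connected and not a cycle'' keeps each individual case to a one-line verification. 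The interplay of the ``no-$\infty$'' side conditions ($3<i$, $i+1<j$, $y_i<x_i$, etc.) with these deletions is exactly what pins down the inequalities in the definitions of Types~3, 4, and 5.
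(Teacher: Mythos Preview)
Your strategy matches the paper's proof almost exactly: first use Theorem~\ref{caracterizacion} to force $V(D)=V(\theta)$, then run a case analysis on the location of any extra arc $(x,y)$ relative to the three paths of the $\theta$, using ``delete a well-chosen vertex and find a non-cycle strongly connected induced subdigraph'' as the contradiction mechanism. So the outline is sound.

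Two points where your sketch diverges from what actually happens and would cause trouble if you wrote it up as stated. First, the paper's case analysis is organized by the dichotomy $a\geq 1$ versus $a=0$, and this split is not merely bookkeeping: when $a\geq 1$ the induced subdigraph on $V(\vec P_{b+2})\cup V(\vec P_{c+2})$ is a \emph{proper} subdigraph (it omits the internal vertices of $\vec P_{a+2}$), so any chord inside $\vec P_{b+2}$ is immediately forbidden, and one concludes that only the two arcs $(v_a,w_b)$ and $(v_b,w_a)$ survive, yielding Type~3 or Type~5. When $a=0$ that argument is unavailable, and this is precisely where Type~4 emerges --- chords internal to $\vec P_{b+2}$ are now allowed. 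Without this split your ``iterate the deletion argument'' step has no anchor for why Type~4 appears in one regime and not the other.

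Second, your description of Type~4 as ``nested/interleaving'' and your claim that ``no legal fourth arc can be added'' are both off. The admissible chords in Type~4 are neither nested nor interleaving: the paper shows (Case~2, Subcase~2) that two chords $(x,y),(x',y')$ in $\vec P_{b+2}$ must satisfy $y<x<y'<x'$, i.e.\ they are \emph{disjoint and linearly ordered} along the path; nested ($y\le y'<x'\le x$) and overlapping ($y<y'\le x<x'$) configurations are explicitly ruled out by exhibiting a bad induced subdigraph. And Type~4 allows arbitrarily many such disjoint chords ($k\geq 2$), so the process does not terminate at three arcs in that branch. The three-arc bound applies only to the Type~5 branch (combining Subcases~4 and~5 in the $a=0$ case, with the further constraint that the two arcs into/out of $\vec P_{c+2}$ land on consecutive vertices).
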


\begin{proof}
Let us first suppose that there is a vertex $v$ in $D$ outside $\theta(a,b,c)$ (i.e. there are more vertices than those of the $\theta$-subdigraph). {Let $D'$ be generated from the vertices of $D$ by removing $v$}. We notice that $D'$ contains an induced strongly connected proper subdigraph different from a cycle or an isolated vertex, because it contains $\theta(a,b,c)$ as a proper subdigraph, and therefore it contradicts Theorem~\ref{caracterizacion}.

Then the vertices in $D$ are exactly the ones in $\theta(a,b,c)$.

We will analyze which arcs can be added to the digraph $\theta(a,b,c)$ using the following strategy: for any arc $(x,y)$ added to $\theta(a,b,c)$ we will try to find a non-trivial strongly connected induced subdigraph $D'$ different from a cycle and different from $D$. In the figures illustrating each case, the digraph $D'$ will be colored in red. If this is possible, then the arc $(x,y)$ can not be added because it would contradict Theorem~\ref{caracterizacion}. If not, the digraph obtained will be one of the four different types. Furthermore, we will need to guarantee that if we combine different permitted arcs we also obtain one of the four different types of digraphs.\\

\begin{figure}[h!]
\centering
\includegraphics[scale=0.15]{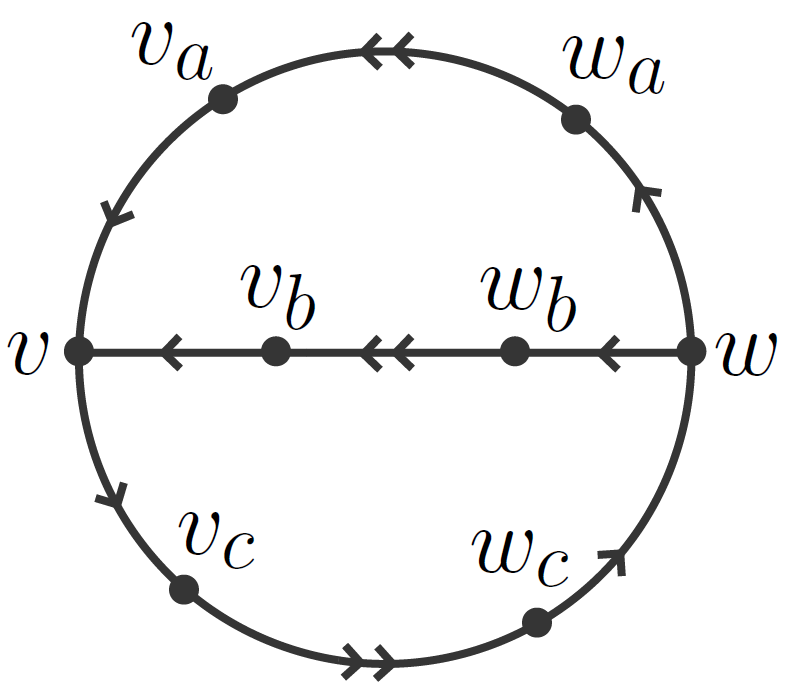}
\caption{$\theta$-digraph and some distinguished vertices in it.}
\label{grafos_40}
\end{figure}

For simplicity, since these vertices are used more than once in what follows, let us denote by $v_a=pred_{\vec{P}_a}(v)$, $v_b=pred_{\vec{P}_b}(v)$, and $v_c=suc_{\vec{P}_c}(v)$ the predecessor vertices of $v$ in the paths $\vec{P}_a$, $\vec{P}_b$ and $\vec{P}_c$, respectively. Analogously, we denote, respectively, by  $w_a=suc_{\vec{P}_a}(w)$, $w_b=suc_{\vec{P}_b}(w)$, $w_c=pred_{\vec{P}_c}(w)$ the successor vertices of $w$. See the representation in Figure~\ref{grafos_40}.

For the analysis it will be useful to analyze cases $a \geq 1$ and $a=0$ separately. Remember that, from the definition of $\theta$-digraphs in Section \ref{sec:basic_families} we are assuming $b \geq 1$.\\

\noindent\underline{\textbf{Case 1}} $a \geq 1$\\
We will further separate the study in five subcases.
\begin{enumerate}
\item $x,y \in V(\vec{P}_{a+2})\cup V(\vec{P}_{c+2})$,
\item $x,y \in V(\vec{P}_{b+2})$,
\item $x \in V(\vec{P}_{b+2})$ and $y \in V(\vec{P}_{c+2})$ or viceversa,
\item $x \in V(\vec{P}_{a+2})$ and $y \in V(\vec{P}_{b+2})$,
\item $x \in V(\vec{P}_{b+2})$ and $y \in V(\vec{P}_{a+2})$.
\end{enumerate}

\noindent \textbf {Subcase (1).} We will prove that we cannot add arcs $(x,y)$ with $x,y \in V(\vec{P}_{a+2})\cup V(\vec{P}_{c+2})$.\\
Indeed, let us suppose that we added such an arc. Then, by considering the digraph ${D'}$ generated by $V(\vec{P}_{a+2})\cup V(\vec{P}_{c+2})$ (see Figure \ref{grafos_09} (left and center), where $D'$ is in red) we obtain an induced strongly connected proper subdigraph  different from a cycle and from an isolated vertex which contradicts Theorem \ref{caracterizacion}.

\begin{figure}[h!]
\centering
\includegraphics[scale=0.1]{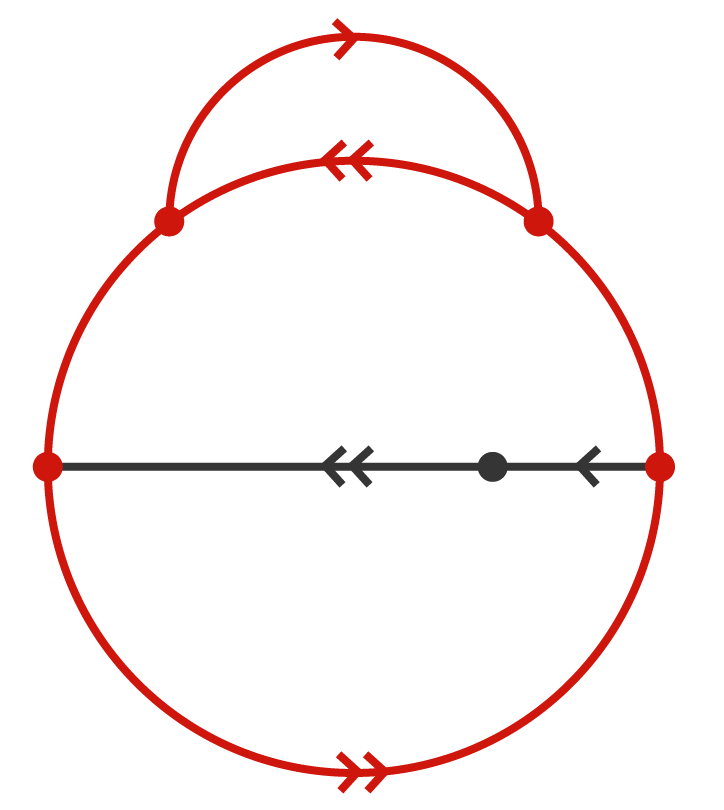}
\hspace{1cm}
\includegraphics[scale=0.1]{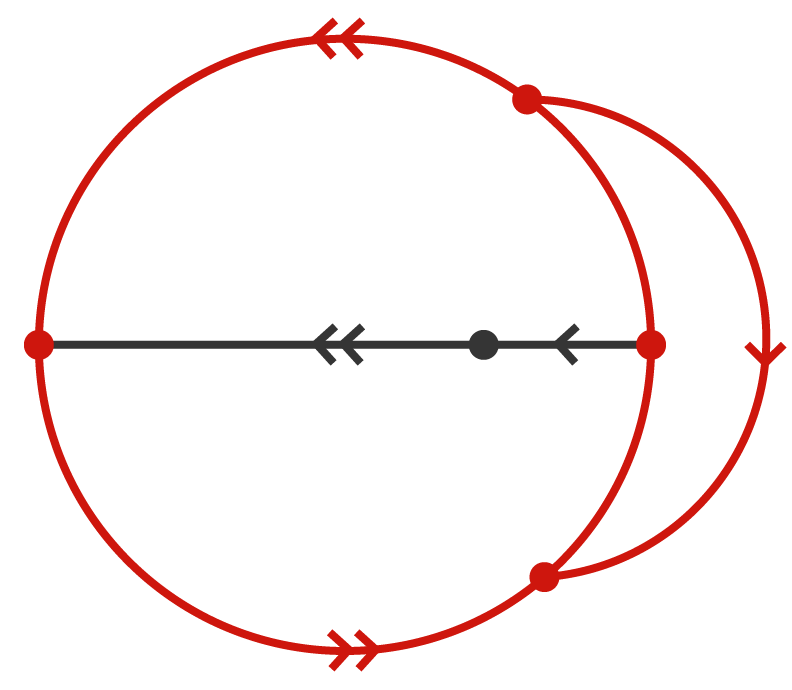}
\hspace{1cm}
\includegraphics[scale=0.1]{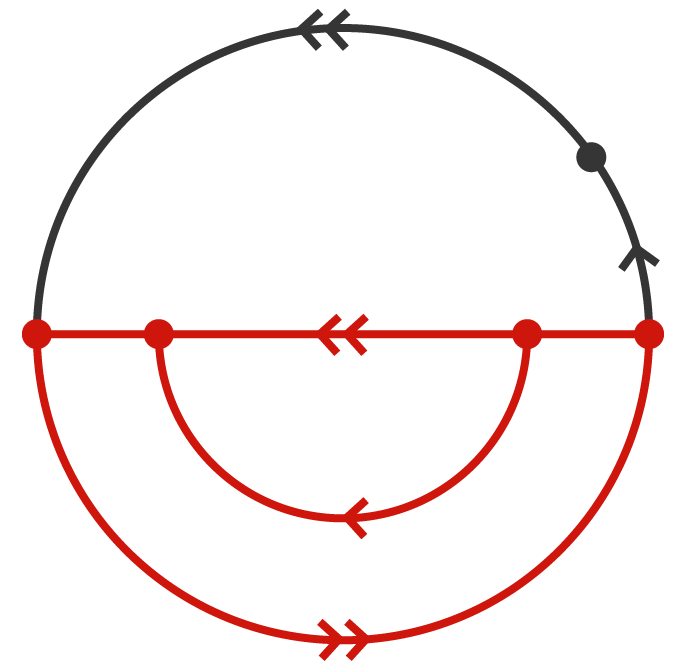}
\caption{Added arc $(x,y)$ with $x,y \in V(\vec{P}_{a+2})\cup V(\vec{P}_{c+2})$ (left and center). Added arc $(x,y)$ with $x,y \in V(\vec{P}_{b+2})$ (right). }
\label{grafos_09}
\end{figure}

Then there are no arcs between vertices of $\vec{P}_{a+2}$ or vertices of $\vec{P}_{c+2}$. {The same proof could be done with $a=0$.}

\noindent \textbf{Subcase (2).} We will prove that we cannot add arcs $(x,y)$ with $x,y \in V(\vec{P}_{b+2})$. Indeed, let us suppose that we added such an arc. Hence, considering the digraph ${D'}$ generated by $V(\vec{P}_{b+2}) \cup V(\vec{P}_{c+2})$ (see Figure \ref{grafos_09} (right)) {we obtain an induced strongly connected proper subdigraph different from a cycle and from an isolated vertex which contradicts Theorem \ref{caracterizacion}.}

Then there are no arcs between vertices of $\vec{P}_{b+2}$.
{Note that we used that $a\geq 1$.}\\

\noindent \textbf{Subcase (3)}
 We will prove that we can neither add arcs $(x,y)$ with $x \in V(\vec{P}_{b+2})$ and $y \in V(\vec{P}_{c+2})$ nor viceversa. Indeed, let us suppose that we added such an arc. Hence, considering the digraph ${D'}$ generated by $V(\vec{P}_{b+2}) \cup V(\vec{P}_{c+2})$ (see Figure \ref{grafos_10}) {we obtain an induced strongly connected proper subdigraph different from a cycle and from an isolated vertex which contradicts Theorem \ref{caracterizacion}.}

\begin{figure}[h!]
\centering
\includegraphics[scale=0.1]{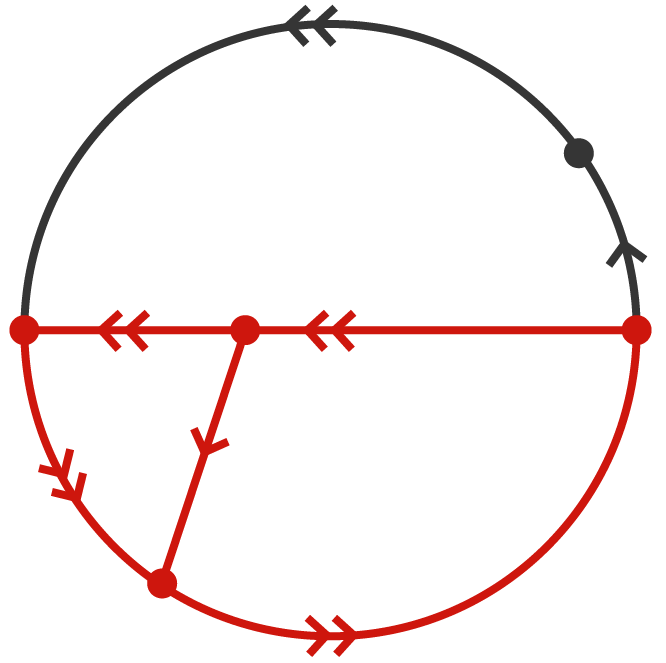}
\hspace{.6cm}
\includegraphics[scale=0.1]{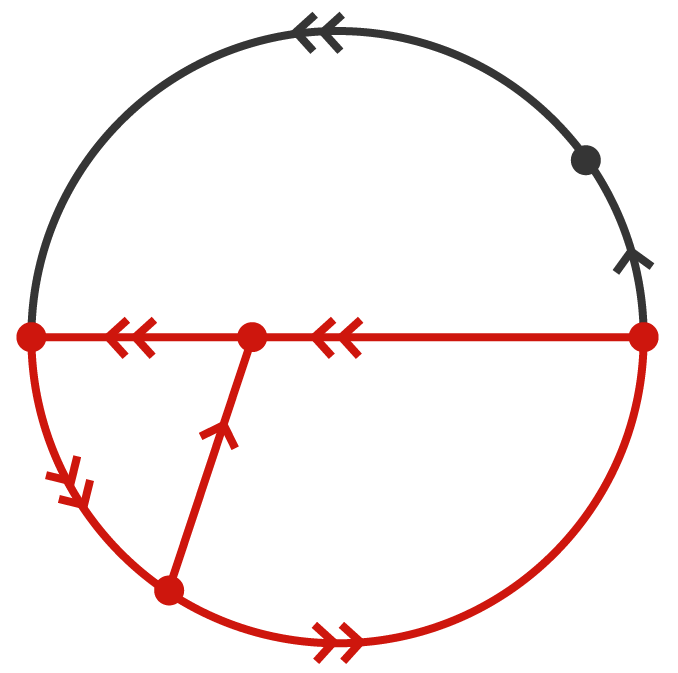}
\caption{Added arc $(x,y)$, with $x\in V(\vec{P}_{a+2})$ and $y\in V(\vec{P}_{c+2})$, and viceversa.}
\label{grafos_10}
\end{figure}
Then there are no arcs $(x,y)$ with $x \in V(\vec{P}_{b+2})$ and $y \in V(\vec{P}_{c+2})$ or viceversa that keep the number of complementarity eigenvalues.\\

\noindent \textbf{Subcase (4).} We will prove that the only arc $(x,y)$ with $x \in V(\vec{P}_{a+2})$ and $y \in V(\vec{P}_{b+2})$ that could be added is $({v}_a,{w}_b)$, {where the vertices $v,~w.~v_a,~w_b$ are located in $D$ as Figure~\ref{grafos_40} illustrates.}\\

We can assume $x\neq v$ and $y \neq w$, otherwise we obtain {a subcase} already analyzed.\\

Let us first see that the starting vertex $x$ of the arc cannot be other than $v_a$. Indeed, if $x \neq v_a$ considering the strongly connected digraph ${D'}$ generated by removing vertex $v_a$ (see the left of Figure \ref{grafos_12}), {we obtain an induced strongly connected proper subdigraph different from a cycle and from an isolated vertex which contradicts Theorem \ref{caracterizacion}.}, so that $x={v_a}$.\\
\begin{figure}[h!]
\centering
\includegraphics[scale=0.1]{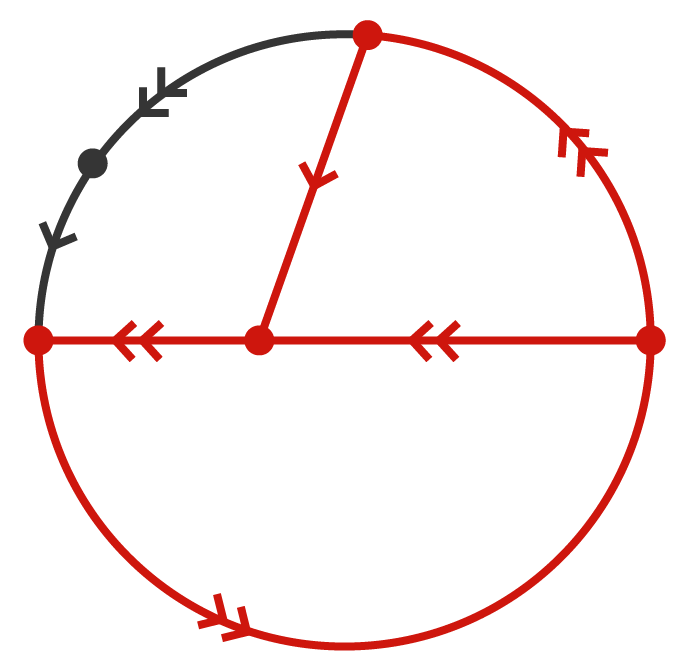}
\hspace{.6cm}
\includegraphics[scale=0.1]{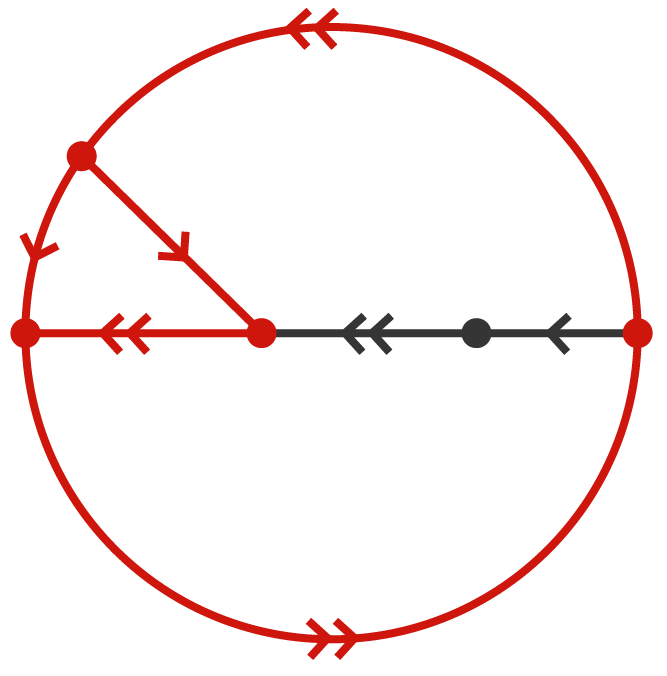}
\caption{Added arc $(x,y)$, with $x\in V(\vec{P}_{a+2})$, $x\neq v_a$ and $y\in V(\vec{P}_{b+2})$ (left). Added arc $(v_a,y)$, with $y\in V(\vec{P}_{b+2})$ and $y\neq w_b$ (right).}
\label{grafos_12}
\end{figure}

Analogously, the ending vertex $y$ of the added arc can only be $w_b$. Indeed, if $x={v}_a$ and $y \neq {w}_b$ considering the strongly connected digraph ${D'}$ generated by removing vertex ${w}_b$ (see the right of Figure~\ref{grafos_12}) {we obtain an induced strongly connected proper subdigraph different from a cycle and from an isolated vertex which contradicts Theorem \ref{caracterizacion}}, so that $y={w}_b$.

{We conclude that} the only arc we are able to add in this item is $({v}_a,{w}_b)$ and we obtain a Type $3$ digraph (see Figure \ref{grafos_11a}).
\begin{figure}[h!]
\centering
\includegraphics[scale=0.12]{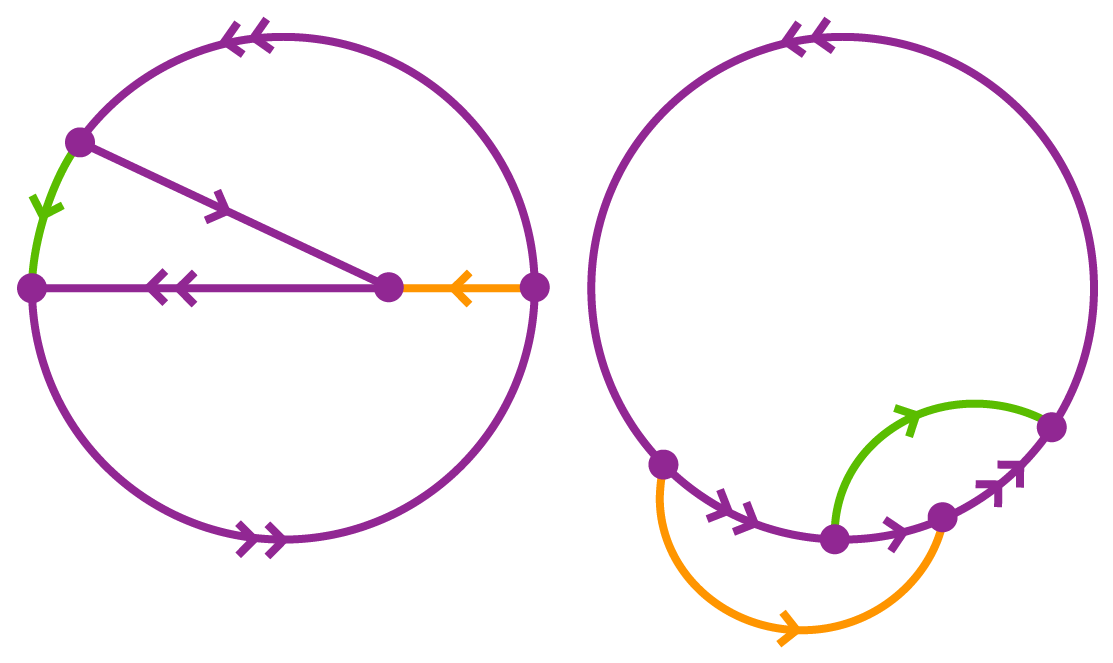}
\caption{Added arc $({v}_a,{w}_b)$ and its conversion into a Type $3$ digraph}
\label{grafos_11a}
\end{figure}

\newpage
\noindent \textbf{Subcase (5).} Analogously to the previous item we can prove that the only arc $(x,y)$ with $x \in V(\vec{P}_{b+2})$ and $y \in V(\vec{P}_{a+2})$ that could be added is $({v}_b, w_a)$ and we obtain a Type $3$ digraph (see Figure \ref{grafos_11b}).\\

\begin{figure}[h!]
\centering
\includegraphics[scale=0.13]{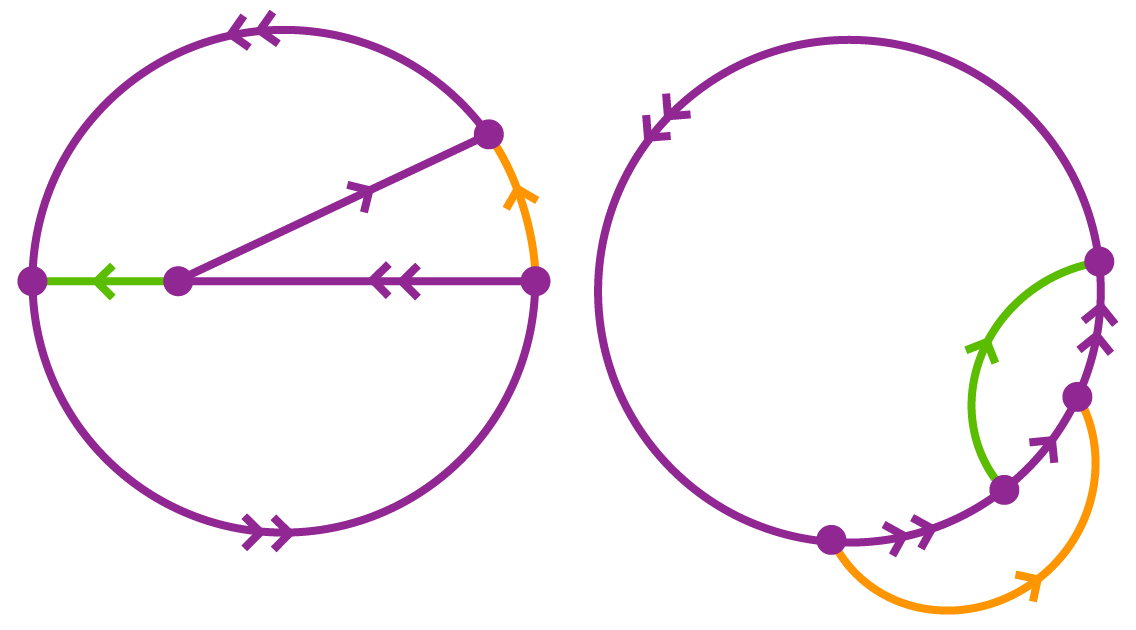}
\caption{Added arc $({v}_b,{w}_a)$ and its conversion into a Type $3$ digraph.}
\label{grafos_11b}
\end{figure}

We have proved so far that in case $D$ has a $\theta(a,b,c)$-subdigraph with $a\geq 1$,
 the only arcs that could be added are $({v}_a,{w}_b)$ and $({v}_b,{w}_a)$. If we add one of them we obtain a type $3$ digraph as seen before and if we add both we obtain a Type $5$ digraph as shown in Figure \ref{grafos_11}.\\

\begin{figure}[h!]

\centering
\includegraphics[scale=0.18]{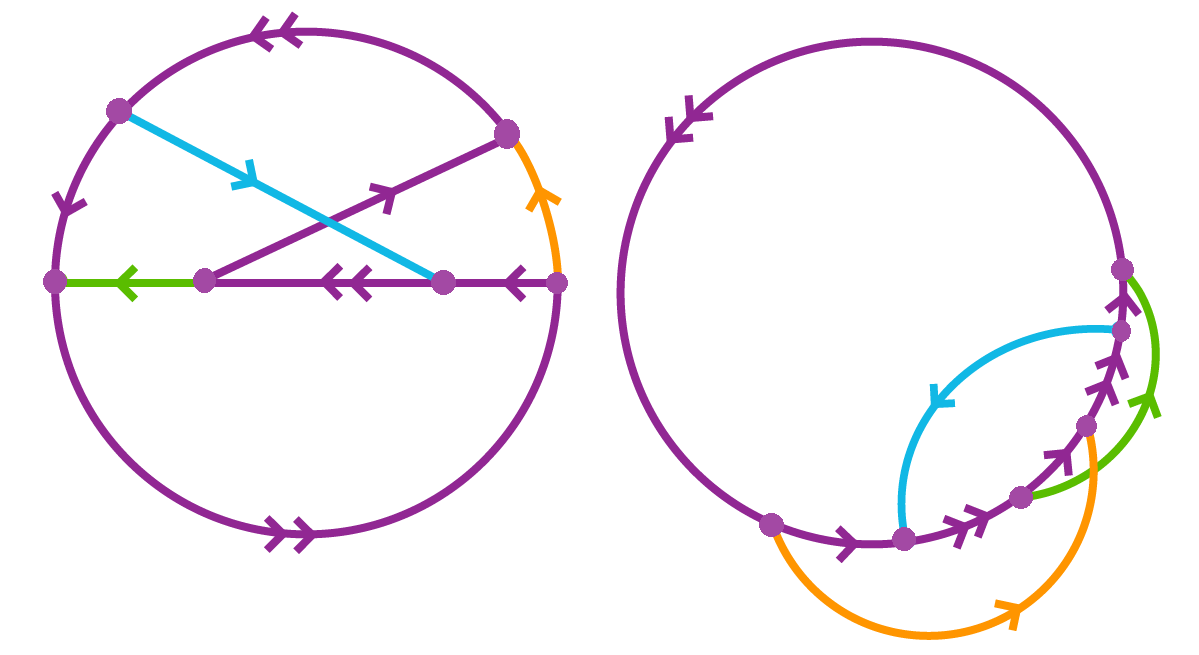}
\caption{Added arcs $({v}_a,{w}_b)$ and $({v}_b,{w}_a)$, and its conversion into a Type $5$ digraph.}
\label{grafos_11}
\end{figure}


\noindent\textbf{\underline{Case 2}~~$a=0$}\\
{We will analyze separetely the arcs $(x,y)$ with
\begin{enumerate}
\item $x,y \in V(\vec{P}_{a+2})\cup V(\vec{P}_{c+2})$,
\item $x,y \in V(\vec{P}_{b+2})$,
\item $x \in V(\vec{P}_{a+2})$ and $y \in V(\vec{P}_{b+2})$ or viceversa,
\item $x \in V(\vec{P}_{b+2})$ and $y \in V(\vec{P}_{c+2})$,
\item $x \in V(\vec{P}_{c+2})$ and $y \in V(\vec{P}_{b+2})$.\\
\end{enumerate}}

\noindent \textbf{Subcase (1).}
Due to the observation made in subcase  (1) of the previous case we know that there could not be arcs between vertices in $V(\vec{P}_{a+2}) \cup V(\vec{P}_{c+2})$ despite $a=0$.\\

\noindent \textbf{Subcase (2).}
Consider in $\vec{P}_{b+2}$ the natural order (in the figures, from right to left). We will prove that if an arc $(x,y)$ with $x,y \in V(\vec{P}_{b+2})$ can be added, its vertices should verify that $v<y<x<w$, {where $v$ and $w$ are located as in Figure~\ref{grafos_40}}. Furthermore, we will prove that if we add $k$ of these arcs $(x_i,y_i)$ with  $i=1, \hdots, k$ they should verify $w<y_i<x_i<y_{i+1}<x_{i+1}<v$ for all $i=1, \hdots, k$.\\

If $x < y$ there exists $z$ such that $x<z<y$, otherwise we would have added an existing arc. Considering the strongly connected digraph ${D'}$ generated by removing vertex $z$  we obtain an induced strongly connected proper subdigraph different from a cycle and from an isolated vertex which contradicts Theorem~\ref{caracterizacion}. This is shown in red in Figure~\ref{grafos_16} (left).
    \begin{figure}[h!]
    \centering
     \includegraphics[scale=0.1]{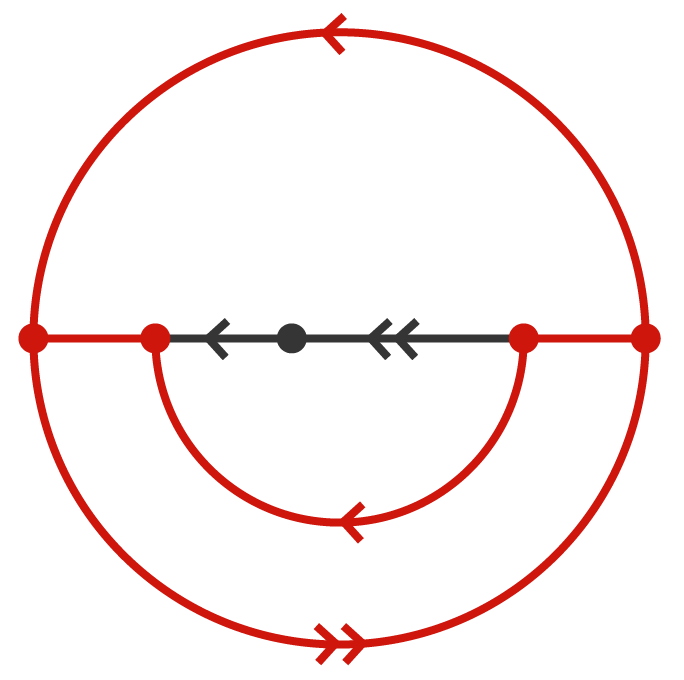}
     \includegraphics[scale=0.12]{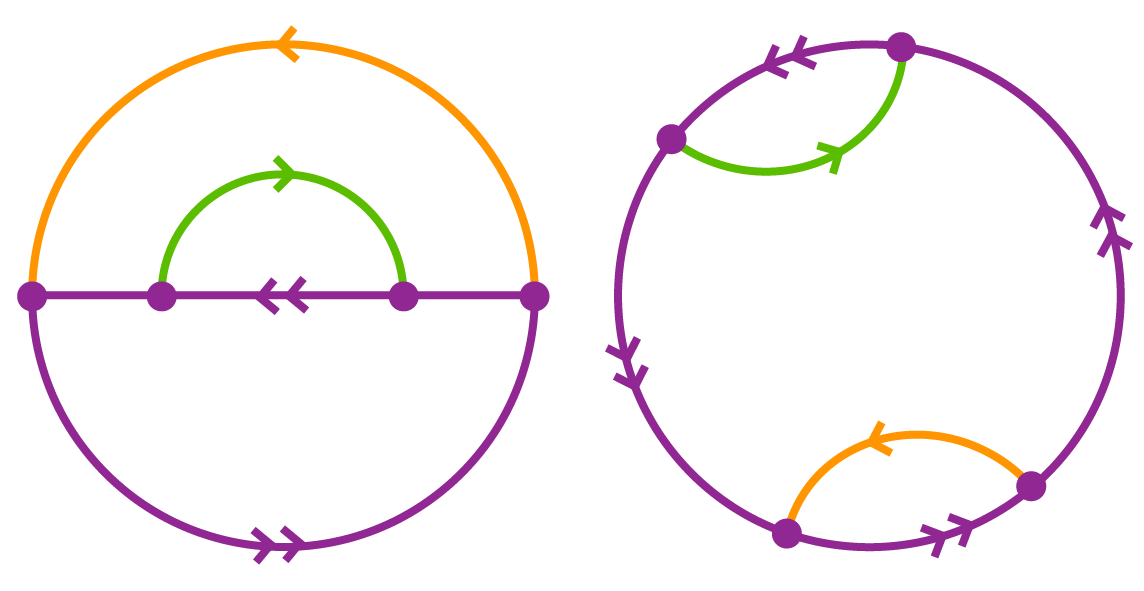}
    \caption{Added arc $(x,y)$, with $x\in V(\vec{P}_{b+2})$, $x<y$ (left). Added arc
    $(x,y)$, with $x\in V(\vec{P}_{b+2})$, $w< y < x < v$ (center) and its conversion to a Type $4$ digraph (right).}
    \label{grafos_16}
    \end{figure}

If $y < x$ we have that $w< y < x < v$ because if either $y=w$ or $x=v$, we would have an $\infty$-subdigraph, which was excluded in the hypothesis. Then, we obtain a Type 4 subdigraph as shown in the {Figure~\ref{grafos_16} (center). The right of Figure~\ref{grafos_16} shows how the obtained digraph may be seen as a Type 4.} Notice that the case $y=w$ and $x=v$ was analyzed in subcase (1).

Let us analyze what happens if we add another arc $(x',y')$ of this form. The vertices of these two arcs should verify one of the following three options
\begin{itemize}
    \item $w<y \leq y'< x' \leq x <v$,
    \item $w<y<y'<x < x'<v$,
    \item $w<y<x<y'< x'<v$.
\end{itemize}

If $w<y \leq y'< x' \leq x <v$,  considering the digraph $D'$ generated by vertices between $y$ and $x$ (including both), as shown in left of Figure \ref{grafos_37}, we obtain a digraph different from a cycle and from $D$ itself, what contradicts {Theorem~\ref{caracterizacion}}.

\begin{figure}[h!]
\centering
\includegraphics[scale=0.1]{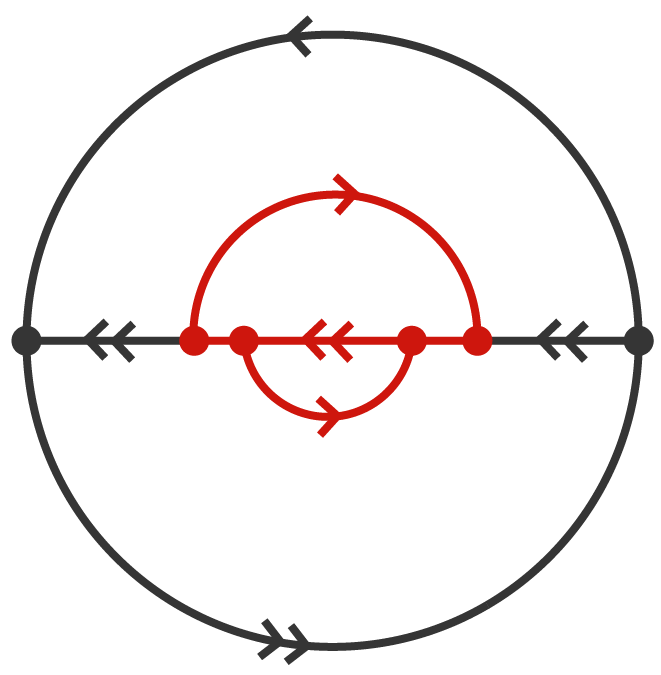}
\hspace{.5cm}
\includegraphics[scale=0.1]{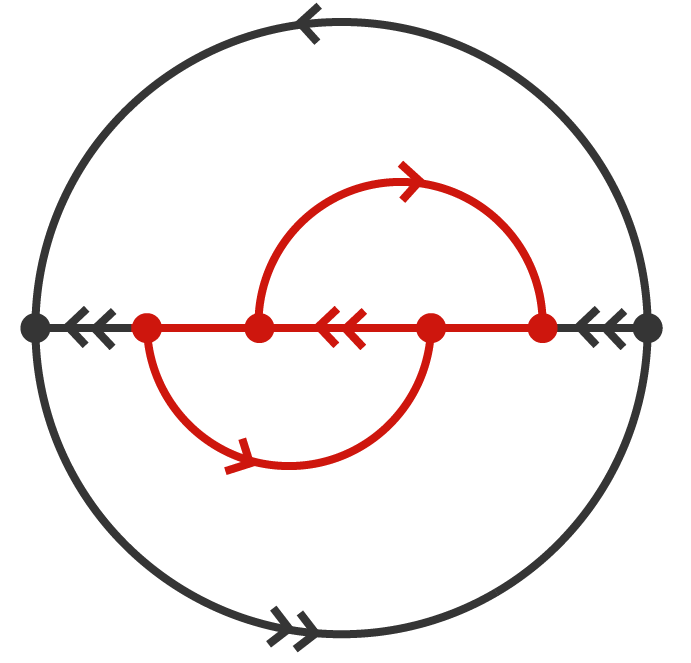}
\caption{Added arcs $(x,y)$ and $(x',y')$, $w<y \leq y'< x' \leq x <v$ (left). Added arcs $(x,y)$ and $(x',y')$ with $w<y \leq y'< x' \leq x <v$ (right).}
\label{grafos_37}
\end{figure}

If $w<y<y'\leq x < x'<v$  considering the digraph $D'$ generated by vertices between $y$ and $x'$ (including both), as shown in right of Figure~\ref{grafos_37}, {we obtain an induced strongly connected proper subdigraph different from a cycle and from an isolated vertex which contradicts Theorem~\ref{caracterizacion}.}

Then we have that these arcs necessarily verify $w<y<x<y'< x'<v$. If we add $k$ of these arcs, $(x_i,y_i)$ with $i=1, \hdots, k$, then $w<y_i<x_i<y_{i+1}<x_{i+1}<v$ for all $i=1, \hdots, k$ (reordering if necessary) which is a Type 4 digraph as shown in Figure~\ref{grafos_36}.\\

\begin{figure}[h!]
\centering
\includegraphics[scale=0.15]{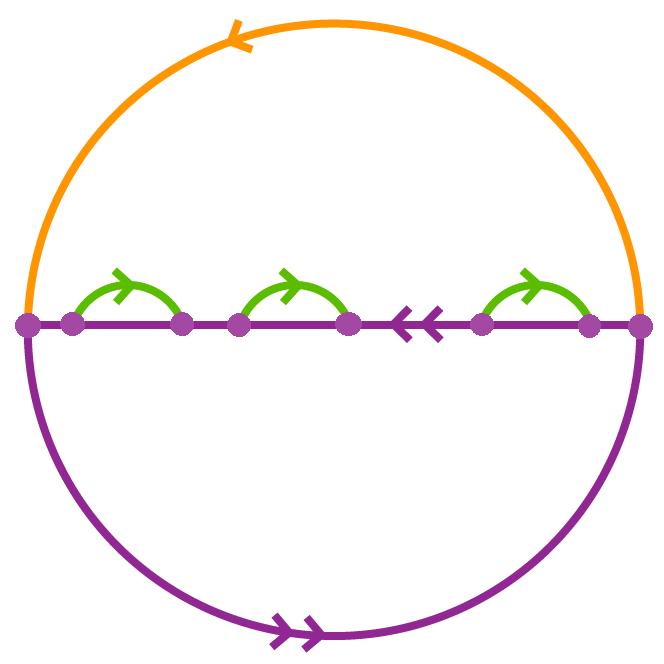}
\hspace{.5cm}
\includegraphics[scale=0.15]{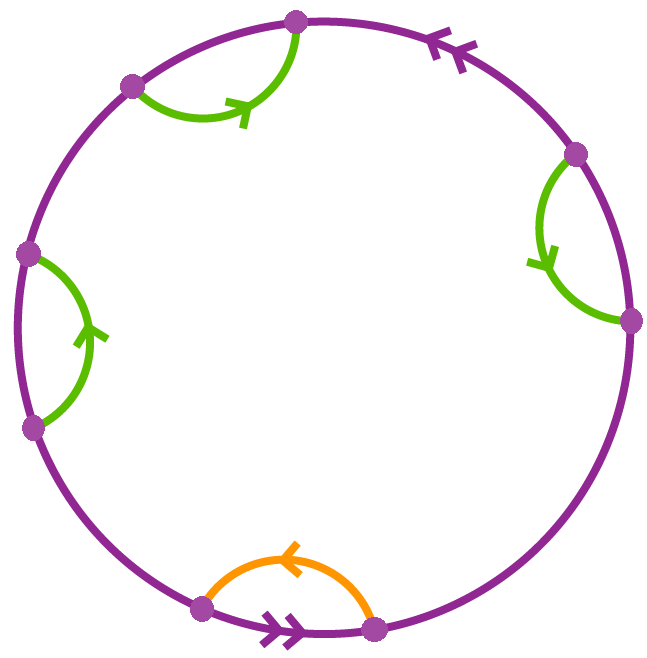}
\caption{Added arcs $(x_i,y_i)$ with $w<y_i<x_i<y_{i+1}<x_{i+1}<v$, $i=1, \hdots, k$ and its conversion into a Type $4$ digraph.}
\label{grafos_36}
\end{figure}

\noindent \textbf{Subcase (3).}
This item is reduced to subcases (1) or (2) because, since $a=0$, then $\vec{P}_{a+2}$ contains only two vertices, which can  be seen as vertices in $\vec{P}_{b+2}$.

\noindent \textbf{Subcase (4).}
 We will prove that if an arc $(x,y)$ with $x \in V(\vec{P}_{b+2})$ and $y \in V(\vec{P}_{c+2})$ can be added, then $x=v_b$ {(see Figure~\ref{grafos_40} for location of the vertex $v$)} and that not any other arc of these type can be added.

Indeed, let us suppose that we added such an arc $(x,y)$ with $x\neq {v_b}$. Hence, considering the strongly connected digraph $D'$ generated by vertices in $D$ different from $v_b$, as shown in Figure \ref{grafos_56} (left), we obtain a digraph different from a cycle and from $D$ itself, which contradicts {Theorem~\ref{caracterizacion}.}

\begin{figure}[h!]
\centering
\includegraphics[scale=0.1]{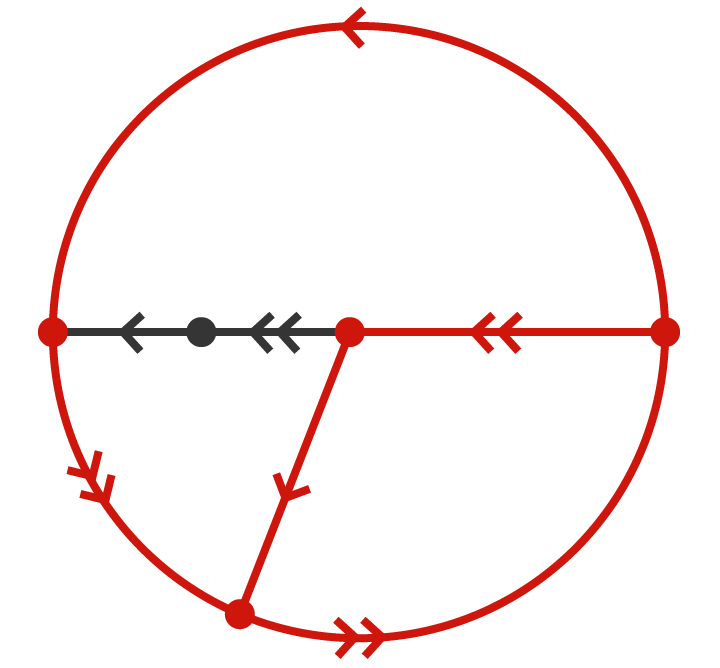}
\includegraphics[scale=0.14]{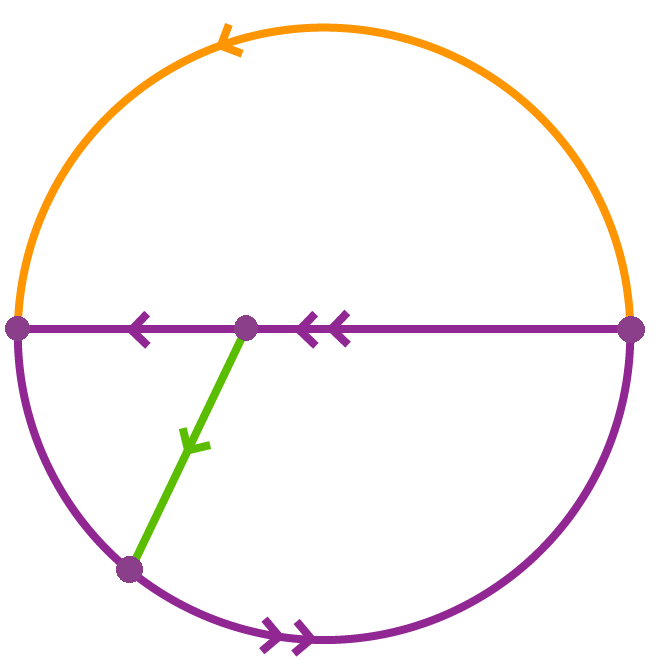}
\includegraphics[scale=0.14]{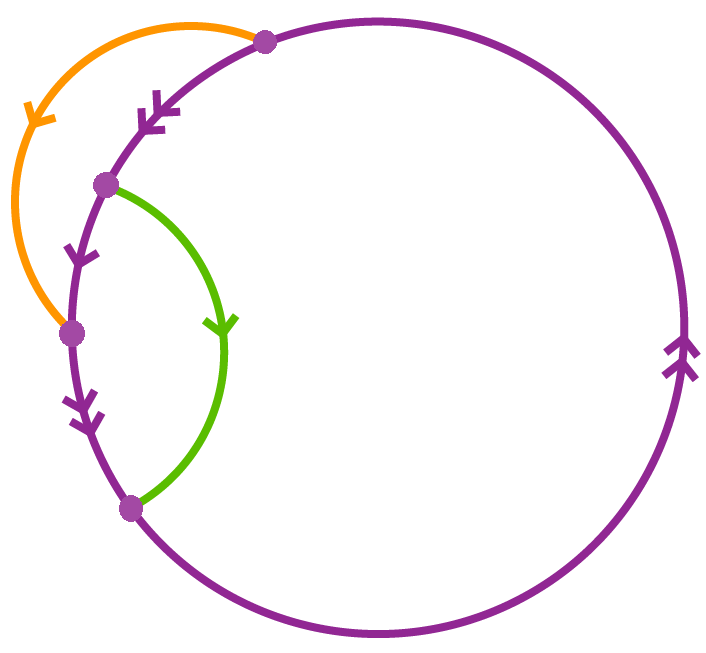}

\caption{Added arc $(x,y)$, with $x\in V(\vec{P}_{b+2})$, $x\neq v_b$ and $y\in V(\vec{P}_{c+2})$ (left). Added arc $(v_b,y)$ with $y \in \vec{P}_{c+2}$ (center) and its conversion into a Type $3$ digraph (right).}
\label{grafos_56}
\end{figure}

Moreover, if $x={v}_b$ then $y \neq v$ because $D$ has no multiple arcs, and $y \neq w $ because $D$ has not an $\infty$-subdigraph and we obtain a Type 3 digraph as shown in the Figure \ref{grafos_56} (center).

Notice that just one of these arcs could be added, otherwise, considering the strongly connected digraph $D'$ generated by vertices in $D$ different from $v$ (see Figure \ref{grafos_56b} (left)) {we obtain an induced strongly connected proper subdigraph different from a cycle and from an isolated vertex which contradicts Theorem~\ref{caracterizacion}.} \\

\begin{figure}[h!]
\centering
\includegraphics[scale=0.12]{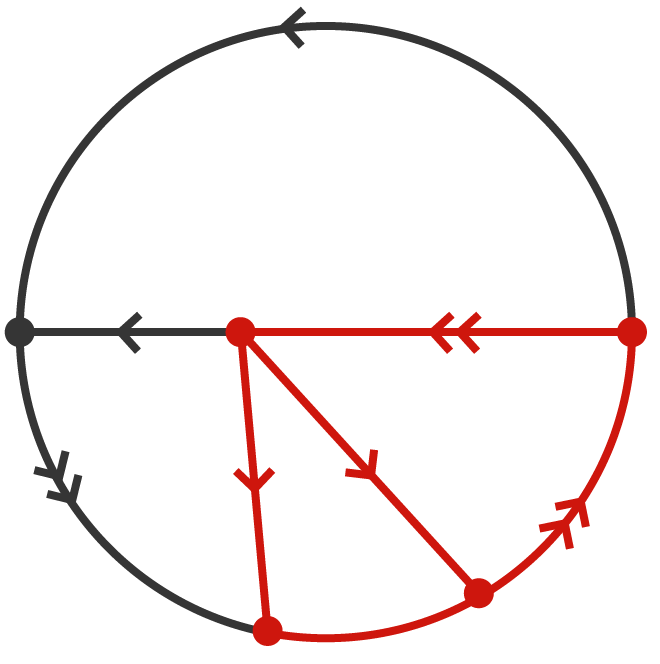}
\hspace{0.5cm}
\includegraphics[scale=0.12]{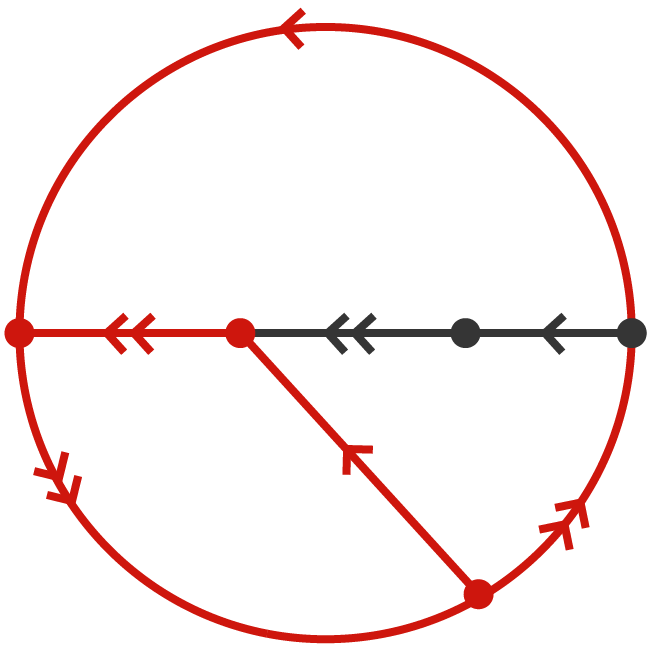}
\caption{Added arcs $(v_b,y)$ and $(v_b,y')$  with $x \in V(\vec{P}_{c+2})$ (left). Added arc $(x,y)$, with $x \in V(\vec{P}_{c+2})$, $x \in V(\vec{P}_{b+2})$ $y\neq w_b$ (right).  }
\label{grafos_56b}
\end{figure}

\noindent \textbf{Subcase (5).}
We will prove that if an arc $(x,y)$ with $x \in V(\vec{P}_{c+2})$ and $y \in V(\vec{P}_{b+2})$ can be added, then $y=w_b$ and that no other arc of this type can be added.

Indeed, let us suppose that we added such an arc $(x,y)$ with $y\neq {w_b}$. Hence, considering the strongly connected digraph $D'$ generated by removing ${w_b}$ (see Figure \ref{grafos_56b}) {we obtain an induced strongly connected proper subdigraph different from a cycle and from an isolated vertex which contradicts Theorem~\ref{caracterizacion}.} Then $y = w_b$.

If $y={w}_b$ then $x \neq w$ because $D$  has no multiple arcs, and $x \neq v $ because $D$ does not have an $\infty$-subdigraph, then we obtain a Type $3$ digraph as shown in Figure \ref{grafos_50a} (left and center).

\begin{figure}[h!]
\centering
\includegraphics[scale=0.15]{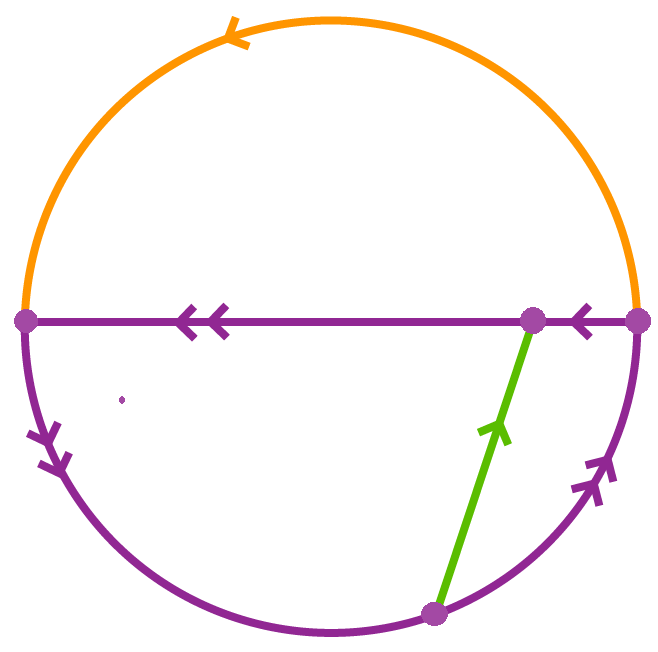}
\includegraphics[scale=0.15]{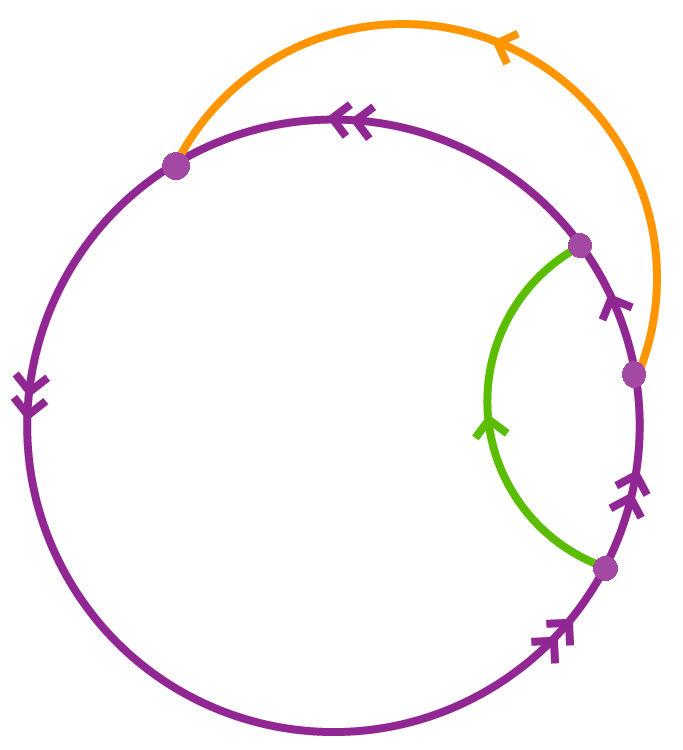}
\includegraphics[scale=0.12]{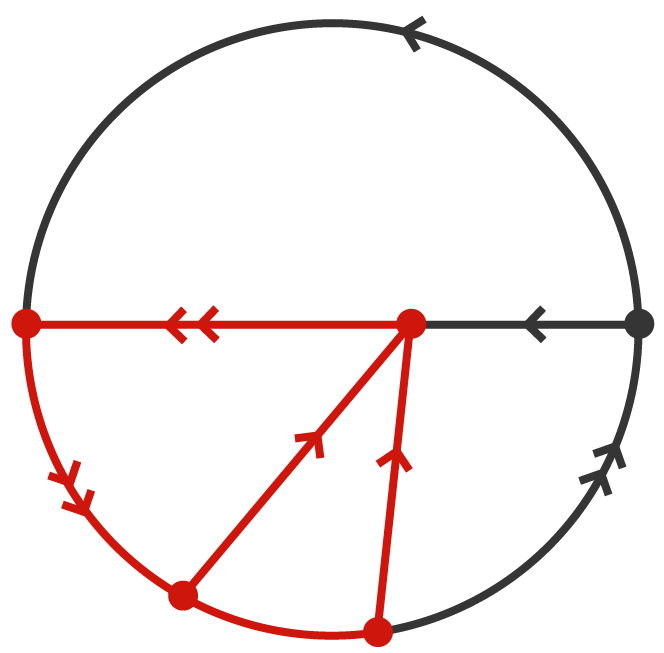}
\caption{Added arc ($x,w_b)$ (left) and its conversion to a Type $3$ (center). Added arcs $(x,w_b)$ and $(x',w_b)$ (right).}
\label{grafos_50a}
\end{figure}

Notice that only one of these arcs could be added, otherwise, considering the strongly connected digraph $D'$ generated by removing $v$ (see Figure \ref{grafos_50a} (right)), we obtain an induced strongly connected proper subdigraph different from a cycle and from an isolated vertex which contradicts Theorem~\ref{caracterizacion}.

We have proved so far that in case $D$ has a $\theta(0,b,c)$-subdigraph, the only arcs that can be added, maintaining three complementarity eigenvalues,  are those from Subcases 2, 4, and 5. Let us study what happens if we try to add simultaneously arcs from
\begin{itemize}
    \item Subcases 2 and 4,
    \item Subcases 2 and 5,
    \item Subcases 4 and 5.
\end{itemize}

{Firstly} we will see that there cannot be arcs obtained from Subcases 2 and 4. Consider an arc $({v_b},y)$ with $y \in V(\vec{P}_{c+2})$ and an arc $(x',y')$ with $x',y'$ in $V(\vec{P}_{b+2})$ with $w<y'<x'<v$, then, considering the strongly connected digraph $D'$ generated by vertices in $D$ different from $v$ (see left of Figure \ref{grafos_39}), we obtain a digraph different from a cycle and from $D$ itself, and this contradicts Theorem~\ref{caracterizacion}.

Secondly, we will see that there cannot be arcs obtained from Subcases 2 and 5. Consider an arc $(x,y)$ with $x,y$ in $V(\vec{P}_{b+2})$ with $w<y<x<v$ and an arc $(x',w_b)$ with $x'$ in $V(\vec{P}_{c+2})$.
Considering the strongly connected digraph $D'$ generated by vertices in $D$ different from $w$ (see right of Figure \ref{grafos_39}) we obtain a digraph different from a cycle and from $D$ itself, which contradicts Theorem~\ref{caracterizacion}.
\begin{figure}[h!]
\centering
\includegraphics[scale=0.1]{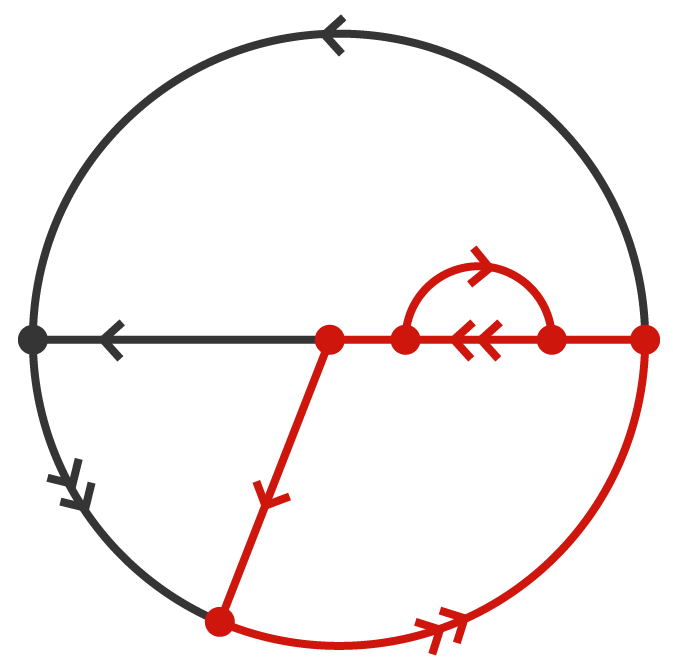}
\hspace{.5cm}
\includegraphics[scale=0.1]{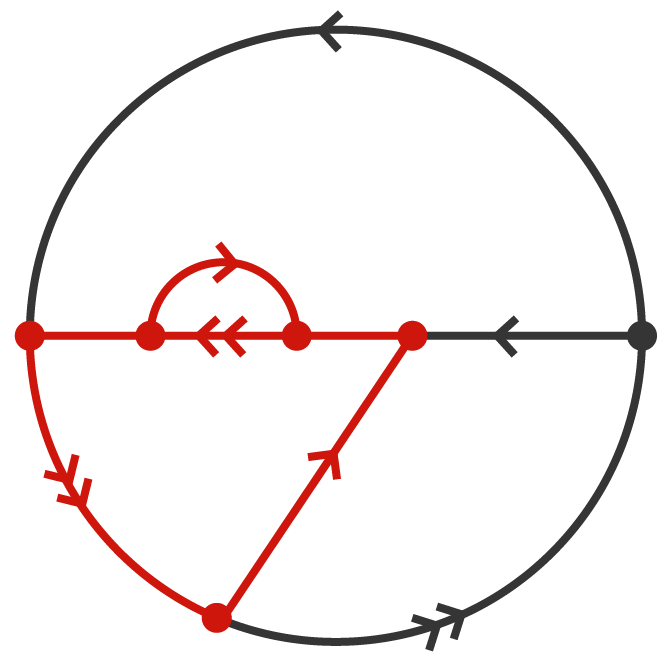}
\caption{Added arcs $(v_b,y)$ and $(x',y')$ with $y'<x'$ (left). Added arcs $(x,w_b)$ and $(x',y')$ with $y'<x'$ (right).}
\label{grafos_39}
\end{figure}

Finally, we analyze what happen if we add arcs obtained in Subcases 4 and 5 simultaneously. Suppose that we have an arc $({v}_b,y)$ with $y \in  V(\vec{P}_{c+2})$ and an arc $(x,{w}_b)$ with $x \in V(\vec{P}_{c+2})$. Assuming the natural order in $\vec{P}_{c+2}$ (from left to right), if $y\leq x$, we consider the strongly connected digraph $D'$ generated by removing $v$ (see left of Figure \ref{grafos_41}) {we obtain an induced strongly connected proper subdigraph different from a cycle and from an isolated vertex which contradicts Theorem~\ref{caracterizacion}.}\\
If $x<y$ and there exists $z \in  V(\vec{P}_{c+2})$ such that $x < z < y$. We then consider the strongly connected digraph $D'$ generated by removing $z$ (see right of Figure \ref{grafos_41}) {we obtain an induced strongly connected proper subdigraph different from a cycle and from an isolated vertex which contradicts Theorem~\ref{caracterizacion}.}

\begin{figure}[h!]
\centering
\includegraphics[scale=0.14]{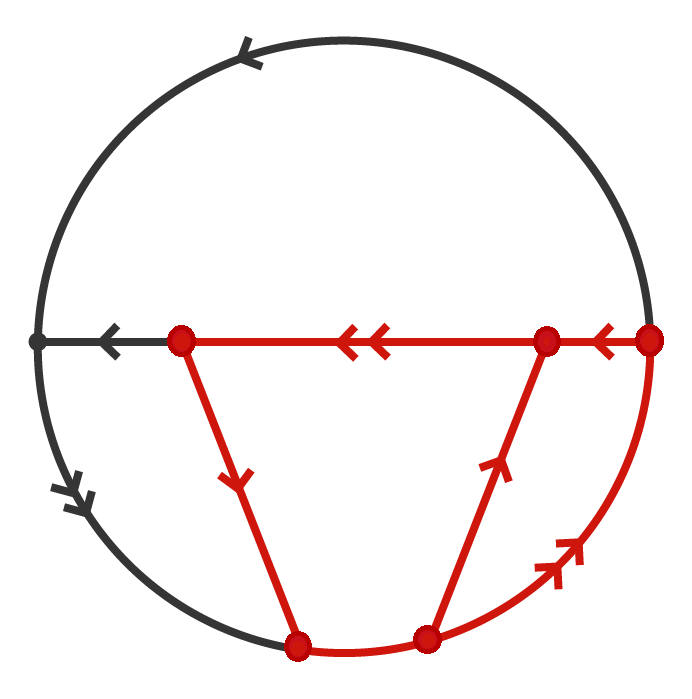}
\hspace{.5cm}
\includegraphics[scale=0.11]{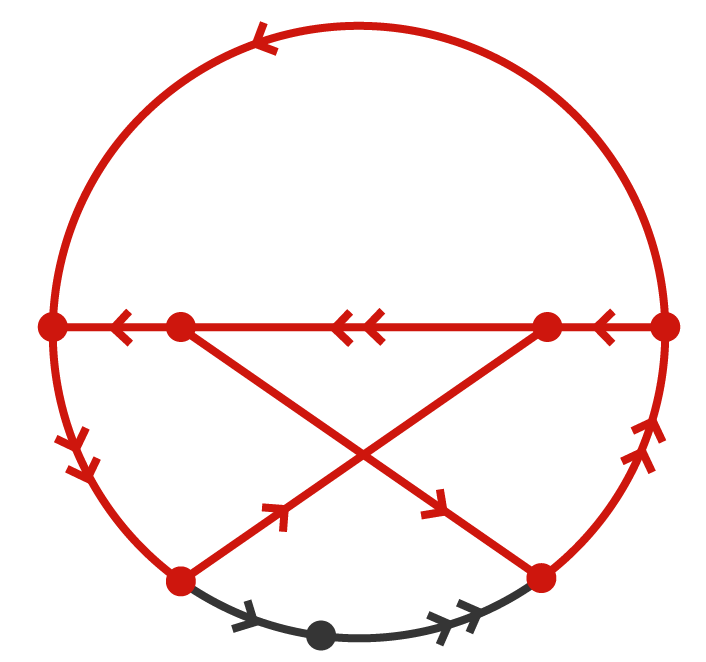}
\caption{Added arcs $(v_a,y)$ and $(x,w_b)$ with $y\leq x$ (left). $(v_a,y)$ and $(x,w_b)$ with $x<y$ (right).}
\label{grafos_41}
\end{figure}

{If $x<y$ and it does not exist such $z$, we have that $y=suc_{\vec{P}_c}(x)$ and we obtain a Type 5 digraph as shown in Figure \ref{grafos_43}.}

\begin{figure}[h!]
\centering
\includegraphics[scale=0.15]{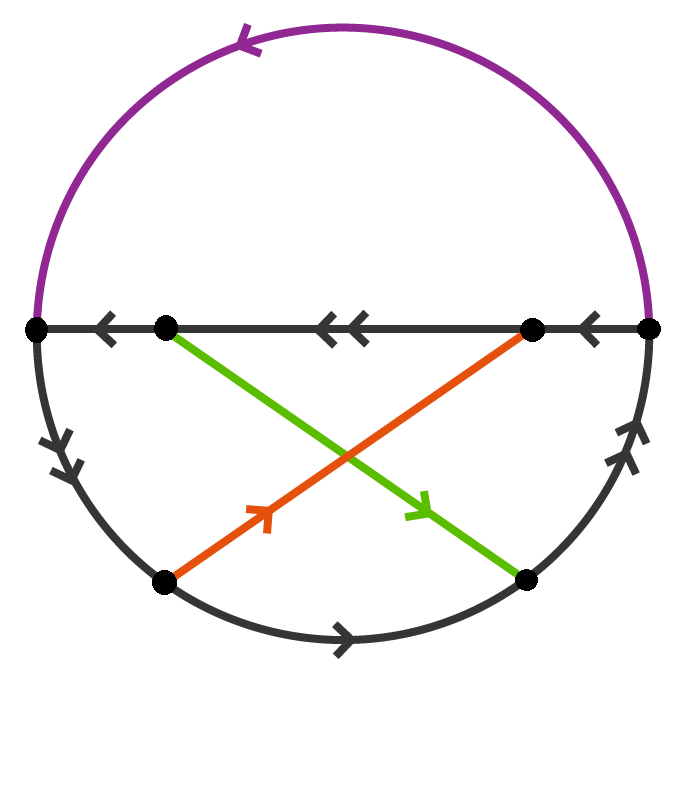}
\includegraphics[scale=0.15]{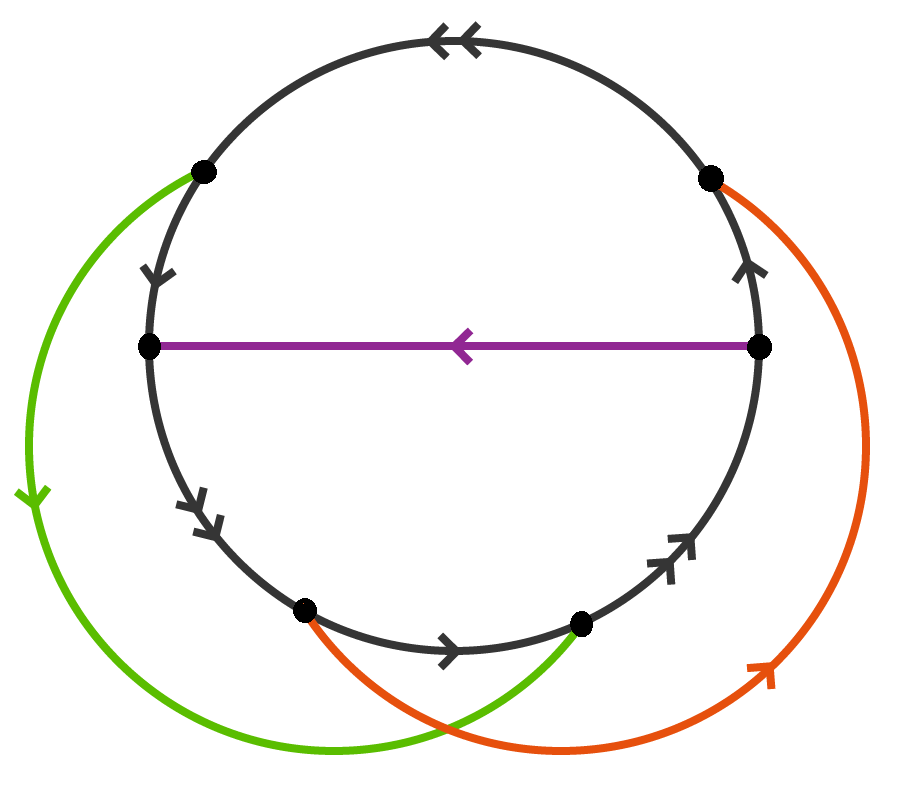}
\caption{Added arcs $(v_a,y)$ and $(x,w_b)$ with $y=suc_{\vec{P}_c}(x)$ and its convertion into a type $5$ digraph.}
\label{grafos_43}
\end{figure}

This concludes the proof.
\end{proof}

\begin{cor}
Let $D$ be a digraph in {$\mathcal{SCD}_3$} with $n$ vertices. If $D$ contains a $\theta (a,b,c)$-subdigraph, then $n=a+b+c+2$.
\end{cor}

\subsection{Resulting characterization of $\mathcal{SCD}_3$}\label{sec:teorema_todo_junto}

In this subsection we describe all digraphs in $\mathcal{SCD}_3$ using our previous results.
\begin{teo}
\label{main_theorem}
Let $D$ be a digraph in $\mathcal{SCD}_3$. Then $D$ belongs either to the $\infty$-Family or to the $\theta$-Family, i.e., $\mathcal{SCD}_3 = F_{\infty} \cup F_{\theta}.$ Precisely $D$ is either an $\infty$-digraph, or $\theta$-digraph, or a Type $1$, or a Type $2$ , or a Type $3$, or a Type $4$, or Type $5$ digraph.

\end{teo}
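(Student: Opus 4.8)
\textbf{Proof plan for Theorem~\ref{main_theorem}.}
The strategy is to reduce this final statement to a direct combination of the two partial characterizations already established, namely Theorem~\ref{subdigrafoocho} (the $\infty$-Family characterization) and Theorem~\ref{principal} (the $\theta$-Family characterization), together with the structural dichotomy of Proposition~\ref{subdigrafosInftyTheta}. First I would observe that if $D$ is a strongly connected digraph with exactly three complementarity eigenvalues, then by Theorem~\ref{tm:char}(1) and~(2), $D$ is neither acyclic nor has all its strongly connected components being cycles or isolated vertices; in particular $D$ is strongly connected and not a cycle. Hence Proposition~\ref{subdigrafosInftyTheta} applies and $D$ contains an $\infty$-subdigraph or a $\theta$-subdigraph.

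The proof then splits into two cases according to which of these two subdigraphs occurs. In the first case, $D$ contains an $\infty$-subdigraph; then Theorem~\ref{subdigrafoocho} applies directly and gives that $D$ is an $\infty$-digraph, a Type~1 digraph, or a Type~2 digraph, so $D$ belongs to the $\infty$-Family. In the second case, $D$ does not contain an $\infty$-subdigraph; since by Proposition~\ref{subdigrafosInftyTheta} it must contain one of the two, it contains a $\theta$-subdigraph but no $\infty$-subdigraph, which is precisely the hypothesis of Theorem~\ref{principal}. That theorem then yields that $D$ is a $\theta$-digraph, a Type~3, Type~4, or Type~5 digraph, so $D$ belongs to the $\theta$-Family. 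Combining the two cases exhausts all possibilities and gives the claimed list.

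I would also note, for completeness, that the converse direction — that each of the seven listed types indeed has exactly three complementarity eigenvalues — was verified type by type in Section~\ref{sec:familes}, where for each family it was checked via Theorem~\ref{compl_spect_induced_subdigraphs} that the only induced strongly connected subdigraphs are cycles, isolated vertices, and the digraph itself, so that $\Pi(D)=\{0,1,\rho(D)\}$ with these three values distinct. Thus the seven families give a complete and non-redundant characterization (non-redundancy being the remark that these seven types are pairwise non-isomorphic, already observed in Section~\ref{sec:familes}).

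The main point to be careful about is the case split logic: one must ensure the two cases (``contains an $\infty$-subdigraph'' versus ``contains a $\theta$-subdigraph but no $\infty$-subdigraph'') are genuinely exhaustive, which is exactly the content of Proposition~\ref{subdigrafosInftyTheta}, and that there is no gap when a digraph contains both an $\infty$- and a $\theta$-subdigraph — in that situation the first case applies and Theorem~\ref{subdigrafoocho} already handles it (indeed Type~1 and Type~2 digraphs do contain $\theta$-subdigraphs, as noted in Section~\ref{sec:digrafos_con_infinito}, so the $\infty$-Family classification subsumes those). Since all the heavy lifting has been done in Theorems~\ref{subdigrafoocho} and~\ref{principal}, no further computation is needed and the proof is essentially a two-line assembly of prior results.
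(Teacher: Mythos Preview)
Your proposal is correct and follows essentially the same route as the paper: use Proposition~\ref{subdigrafosInftyTheta} to obtain an $\infty$- or $\theta$-subdigraph, then invoke Theorem~\ref{subdigrafoocho} in the first case and Theorem~\ref{principal} in the second (when no $\infty$-subdigraph is present). Your additional remarks on the converse, non-redundancy, and the exhaustiveness of the case split are accurate and go slightly beyond what the paper's short proof spells out, but the core argument is identical.
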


\begin{proof}
Since $D$ belongs to $\mathcal{SCD}_3$, we know that $D$ is a strongly connected digraph different from a cycle {and an isolated vertex}, and therefore, by Proposition~\ref{subdigrafosInftyTheta} we have that $D$ has an $\infty$-subdigraph or a $\theta$-subdigraph.

If $D$ has an $\infty$-subdigraph we know by Theorem~\ref{subdigrafoocho} that $D$ belongs to {the $\infty$-Family, e.g. it is either} an $\infty$-digraph, a Type $1$ or Type $2$ digraph. {If $D$ does not have an $\infty$-subdigraph, then}  the digraph $D$ verifies the hypothesis of Theorem~\ref{principal}, and then $D$ belongs to {the $\theta$-Family, e. g. $D$ is either}  a $\theta$-digraph, or a Type $3$, or a Type $4$, or Type $5$ digraph.
\end{proof}
In view of Proposition~\ref{prop:strong}, we have the following structural characterization of digraphs having 3 complementarity eigenvalues.
{\begin{cor}
Let $D$ be a digraph with three complementarity eigenvalues. Then, its strongly connected components are either isolated vertices, cycles, or any of the seven types of digraphs presented above. Moreover, at least one of these seven types must appear, and if two or more strongly connected components belongs to these seven types, then they should share the same spectral radius.
\end{cor}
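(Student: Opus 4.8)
The plan is to decompose $D$ into its strongly connected components, view each of them as a strongly connected digraph $D_i$ ($i=1,\dots,k$), and then invoke Proposition~\ref{prop:strong} together with the classification results already proved. First I would record the basic inequality: since $\Pi(D)=\cup_{i=1}^k\Pi(D_i)$ and every complementarity spectrum contains $0$, we have $\Pi(D_i)\subseteq\Pi(D)$, hence $1\le\#\Pi(D_i)\le\#\Pi(D)=3$ for each $i$.

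Next I would split according to the value of $\#\Pi(D_i)\in\{1,2,3\}$. If $\#\Pi(D_i)=1$, Theorem~\ref{tm:char}(1) forces $D_i$ to be acyclic; being strongly connected as well, $D_i$ must be a single vertex, since any two distinct vertices of a strongly connected digraph lie on a common directed cycle. If $\#\Pi(D_i)=2$, Theorem~\ref{tm:char}(2) tells us its strongly connected components are cycles or isolated vertices; as $D_i$ is itself a strongly connected digraph that is not acyclic, it is a cycle. If $\#\Pi(D_i)=3$, Theorem~\ref{main_theorem} applies directly and $D_i$ is one of the seven types. This yields the first assertion.

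For the second assertion I would argue by contradiction: if no $D_i$ were one of the seven types, then each $D_i$ would be an isolated vertex or a cycle, so $\Pi(D_i)\subseteq\{0,1\}$ for all $i$ and therefore $\Pi(D)\subseteq\{0,1\}$, contradicting $\#\Pi(D)=3$. For the third assertion, recall that every one of the seven types $T$ has $\Pi(T)=\{0,1,\rho(T)\}$, a fact recorded case by case in Section~\ref{sec:familes}, and that $\rho(T)>1$ by the Perron--Frobenius Theorem since a strongly connected digraph that is not a cycle has spectral radius strictly larger than $1$. Thus if $D_i$ and $D_j$ with $i\neq j$ both belong to the seven types, then $\{0,1,\rho(D_i),\rho(D_j)\}\subseteq\Pi(D)$, and $\#\Pi(D)=3$ forces $\rho(D_i)=\rho(D_j)$, as claimed.

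I do not expect a genuine obstacle in this corollary; it is a bookkeeping consequence of Proposition~\ref{prop:strong}, Theorem~\ref{tm:char} and Theorem~\ref{main_theorem}. The only points demanding a little care are the two elementary reductions ``strongly connected and acyclic $\Rightarrow$ isolated vertex'' and ``strongly connected with $\#\Pi=2\Rightarrow$ cycle'', together with the observation that all seven types share the common spectral shape $\{0,1,\rho(\cdot)\}$ with $\rho(\cdot)>1$, which is what makes the spectral-radius-matching condition both necessary and sufficient for the union to have exactly three elements.
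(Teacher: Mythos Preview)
Your argument is correct and follows exactly the route the paper intends: the paper does not spell out a proof at all, merely stating that the corollary holds ``in view of Proposition~\ref{prop:strong}'', and your write-up is precisely the natural unpacking of that remark via Theorem~\ref{tm:char} and Theorem~\ref{main_theorem}. The two small reductions you flagged (strongly connected acyclic $\Rightarrow$ single vertex, and strongly connected with $\#\Pi=2\Rightarrow$ cycle) are exactly the points one has to make explicit, and you handle them correctly.
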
}

\section{Conclusions}
Throughout this work we presented a {structural} characterization result, and a {precise determination} of $\mathcal{SCD}_3$. Namely, the characterization presented in Theorem~\ref{caracterizacion} states that digraphs in $\mathcal{SCD}_3$ are those such that, when removing any vertex, the resulting digraph only has cycles or isolated vertices as strongly connected induced subdigraphs.
On the other hand, Theorem \ref{main_theorem}, which is the main contribution of this paper, states that any digraph in $\mathcal{SCD}_3$ belongs to one of the seven families of digraphs presented in the text, giving a complete description, determining exactly which digraphs have three complementarity eigenvalues.

One way to take these results one step further is to consider digraphs in $\mathcal{SCD}_4$. However, even a structural characterization result similar to the one given by Theorem~\ref{caracterizacion} , seems to be significantly harder, due to the combinatorial nature of the problem and the many possibilities that arise in the process. The key to characterize  digraphs with three complementarity eigenvalues was that strongly connected digraphs with two complementarity eigenvalues are cycles or isolated vertices. When considering digraphs $D$ in $\mathcal{SCD}_4$, the digraphs that result when removing a vertex have much more diversity, since there are seven families of such digraphs with three complementarity eigenvalues, and a combinatorial approach to analyze the cases which they can be modified to yield four complementarity eigenvalues may be unfeasible.

Another interesting and natural question is the following. Once digraphs in $\mathcal{SCD}_3$ {are characterized}, can one determine the digraph through its complementarity spectrum? The answer to this question, given in \cite{flor} is negative, since examples of non-isomorphic digraphs with the same {three} complementarity spectrum are presented. In the notation of this manuscript, those digraphs are Type $4$ digraphs. We can ask whether any family of digraphs with three complementarity eigenvalues is determined by their complementarity spectrum. As a motivating example, it follows from the discussion in {Remark~\ref{rem:dcs}} of Section \ref{sec:digrafos_con_infinito} that {some} digraphs in the $\infty$-Family can be distinguished by their complementarity spectrum.

\backmatter

\bmhead{Acknowledgments}
This research is part of the doctoral studies of F. Cubr\'{\i}a. V. Trevisan  acknowledges partial support of CNPq grants 409746/2016-9 and 310827/2020-5, and FAPERGS grant PqG 17/2551-0001.
M. Fiori and D. Bravo acknowledge the ﬁnancial support provided by ANII, Uruguay. F. Cubría thanks the doctoral scholarship from CAP-UdelaR. We thank Lucía Riera for the help with the figures.

\bibliography{biblio}


\begin{thebibliography}{16}
\ifx \bisbn   \undefined \def \bisbn  #1{ISBN #1}\fi
\ifx \binits  \undefined \def \binits#1{#1}\fi
\ifx \bauthor  \undefined \def \bauthor#1{#1}\fi
\ifx \batitle  \undefined \def \batitle#1{#1}\fi
\ifx \bjtitle  \undefined \def \bjtitle#1{#1}\fi
\ifx \bvolume  \undefined \def \bvolume#1{\textbf{#1}}\fi
\ifx \byear  \undefined \def \byear#1{#1}\fi
\ifx \bissue  \undefined \def \bissue#1{#1}\fi
\ifx \bfpage  \undefined \def \bfpage#1{#1}\fi
\ifx \blpage  \undefined \def \blpage #1{#1}\fi
\ifx \burl  \undefined \def \burl#1{\textsf{#1}}\fi
\ifx \doiurl  \undefined \def \doiurl#1{\url{https://doi.org/#1}}\fi
\ifx \betal  \undefined \def \betal{\textit{et al.}}\fi
\ifx \binstitute  \undefined \def \binstitute#1{#1}\fi
\ifx \binstitutionaled  \undefined \def \binstitutionaled#1{#1}\fi
\ifx \bctitle  \undefined \def \bctitle#1{#1}\fi
\ifx \beditor  \undefined \def \beditor#1{#1}\fi
\ifx \bpublisher  \undefined \def \bpublisher#1{#1}\fi
\ifx \bbtitle  \undefined \def \bbtitle#1{#1}\fi
\ifx \bedition  \undefined \def \bedition#1{#1}\fi
\ifx \bseriesno  \undefined \def \bseriesno#1{#1}\fi
\ifx \blocation  \undefined \def \blocation#1{#1}\fi
\ifx \bsertitle  \undefined \def \bsertitle#1{#1}\fi
\ifx \bsnm \undefined \def \bsnm#1{#1}\fi
\ifx \bsuffix \undefined \def \bsuffix#1{#1}\fi
\ifx \bparticle \undefined \def \bparticle#1{#1}\fi
\ifx \barticle \undefined \def \barticle#1{#1}\fi
\bibcommenthead
\ifx \bconfdate \undefined \def \bconfdate #1{#1}\fi
\ifx \botherref \undefined \def \botherref #1{#1}\fi
\ifx \url \undefined \def \url#1{\textsf{#1}}\fi
\ifx \bchapter \undefined \def \bchapter#1{#1}\fi
\ifx \bbook \undefined \def \bbook#1{#1}\fi
\ifx \bcomment \undefined \def \bcomment#1{#1}\fi
\ifx \oauthor \undefined \def \oauthor#1{#1}\fi
\ifx \citeauthoryear \undefined \def \citeauthoryear#1{#1}\fi
\ifx \endbibitem  \undefined \def \endbibitem {}\fi
\ifx \bconflocation  \undefined \def \bconflocation#1{#1}\fi
\ifx \arxivurl  \undefined \def \arxivurl#1{\textsf{#1}}\fi
\csname PreBibitemsHook\endcsname

\bibitem{Sachs1964}
\begin{barticle}
\bauthor{\bsnm{Sachs}, \binits{H.}}:
\batitle{Beziehungen zwischen den in einem graphen enthaltenen kreisen und
  seinem charakteristischen polynom}.
\bjtitle{Publ. Math. Debrecen}
\bvolume{11}(\bissue{1}),
\bfpage{119}--\blpage{134}
(\byear{1964})
\end{barticle}
\endbibitem

\bibitem{fiedler73}
\begin{barticle}
\bauthor{\bsnm{Fiedler}, \binits{M.}}:
\batitle{Algebraic connectivity of graphs}.
\bjtitle{Czechoslovak Mathematical Journal}
\bvolume{23}(\bissue{2}),
\bfpage{298}--\blpage{305}
(\byear{1973})
\end{barticle}
\endbibitem

\bibitem{Collatz1957}
\begin{barticle}
\bauthor{\bsnm{Von~Collatz}, \binits{L.}},
\bauthor{\bsnm{Sinogowitz}, \binits{U.}}:
\batitle{Spektren endlicher grafen}.
\bjtitle{Abhandlungen aus dem Mathematischen Seminar der Universit{\"a}t
  Hamburg}
\bvolume{21}(\bissue{1}),
\bfpage{63}--\blpage{77}
(\byear{1957}).
\bcomment{Springer}
\end{barticle}
\endbibitem

\bibitem{Brouwer12}
\begin{bbook}
\bauthor{\bsnm{Brouwer}, \binits{A.E.}},
\bauthor{\bsnm{Haemers}, \binits{W.H.}}:
\bbtitle{Spectra of Graphs}.
\bpublisher{Springer},
\blocation{New York, NY}
(\byear{2012}).
\doiurl{10.1007/978-1-4614-1939-6}
\end{bbook}
\endbibitem

\bibitem{Doob}
\begin{barticle}
\bauthor{\bsnm{Doob}, \binits{M.}}:
\batitle{Graphs with a small number of distinct eigenvalues}.
\bjtitle{Annals of the New York Academy of Sciences}
\bvolume{175},
\bfpage{104}--\blpage{110}
(\byear{2007})
\end{barticle}
\endbibitem

\bibitem{Cvetkovic1998}
\begin{bbook}
\bauthor{\bsnm{Cvetkovi{\'c}}, \binits{D.}},
\bauthor{\bsnm{Doob}, \binits{M.}},
\bauthor{\bsnm{Sachs}, \binits{H.}}:
\bbtitle{Spectra of Graphs: Theory and Applications}.
\bpublisher{Wiley},
\blocation{New York}
(\byear{1998})
\end{bbook}
\endbibitem

\bibitem{Seeger99}
\begin{barticle}
\bauthor{\bsnm{Seeger}, \binits{A.}}:
\batitle{Eigenvalue analysis of equilibrium processes defined by linear
  complementarity conditions}.
\bjtitle{Linear Algebra and its Applications}
\bvolume{292}(\bissue{1-3}),
\bfpage{1}--\blpage{14}
(\byear{1999})
\end{barticle}
\endbibitem

\bibitem{Fernandes2017}
\begin{barticle}
\bauthor{\bsnm{Fernandes}, \binits{R.}},
\bauthor{\bsnm{Judice}, \binits{J.}},
\bauthor{\bsnm{Trevisan}, \binits{V.}}:
\batitle{Complementary eigenvalues of graphs}.
\bjtitle{Linear Algebra and its Applications}
\bvolume{527},
\bfpage{216}--\blpage{231}
(\byear{2017})
\end{barticle}
\endbibitem

\bibitem{Seeger2018}
\begin{barticle}
\bauthor{\bsnm{Seeger}, \binits{A.}}:
\batitle{Complementarity eigenvalue analysis of connected graphs}.
\bjtitle{Linear Algebra and its Applications}
\bvolume{543},
\bfpage{205}--\blpage{225}
(\byear{2018})
\end{barticle}
\endbibitem

\bibitem{flor}
\begin{barticle}
\bauthor{\bsnm{Bravo}, \binits{D.}},
\bauthor{\bsnm{Cubría}, \binits{F.}},
\bauthor{\bsnm{Fiori}, \binits{M.}},
\bauthor{\bsnm{Trevisan}, \binits{V.}}:
\batitle{Complementarity spectrum of digraphs}.
\bjtitle{Linear Algebra and its Applications}
\bvolume{627},
\bfpage{24}--\blpage{40}
(\byear{2021})
\end{barticle}
\endbibitem

\bibitem{Olivieri}
\begin{barticle}
\bauthor{\bsnm{Olivieri}, \binits{A.}},
\bauthor{\bsnm{Rada}, \binits{J.}},
\bauthor{\bsnm{Rios~Rodriguez}, \binits{A.}}:
\batitle{Digraphs with few eigenvalues}.
\bjtitle{Utilitas Mathematica}
\bvolume{96},
\bfpage{89}--\blpage{99}
(\byear{2015})
\end{barticle}
\endbibitem

\bibitem{Adly2015}
\begin{barticle}
\bauthor{\bsnm{Adly}, \binits{S.}},
\bauthor{\bsnm{Rammal}, \binits{H.}}:
\batitle{A new method for solving second-order cone eigenvalue complementarity
  problems}.
\bjtitle{Journal of Optimization Theory and Applications}
\bvolume{165}(\bissue{2}),
\bfpage{563}--\blpage{585}
(\byear{2015})
\end{barticle}
\endbibitem

\bibitem{Facchinei2007}
\begin{bbook}
\bauthor{\bsnm{Facchinei}, \binits{F.}},
\bauthor{\bsnm{Pang}, \binits{J.-S.}}:
\bbtitle{Finite-dimensional Variational Inequalities and Complementarity
  Problems}.
\bpublisher{Springer},
\blocation{New York}
(\byear{2007})
\end{bbook}
\endbibitem

\bibitem{Pinto2008}
\begin{barticle}
\bauthor{\bparticle{Pinto~da} \bsnm{Costa}, \binits{A.}},
\bauthor{\bsnm{Seeger}, \binits{A.}}:
\batitle{Cone-constrained eigenvalue problems: theory and algorithms}.
\bjtitle{Computational Optimization and Applications}
\bvolume{45},
\bfpage{25}--\blpage{57}
(\byear{2010})
\end{barticle}
\endbibitem

\bibitem{Pinto2004}
\begin{barticle}
\bauthor{\bparticle{Pinto~da} \bsnm{Costa}, \binits{A.}},
\bauthor{\bsnm{Martins}, \binits{J.}},
\bauthor{\bsnm{Figueiredo}, \binits{I.}},
\bauthor{\bsnm{J{\'u}dice}, \binits{J.}}:
\batitle{The directional instability problem in systems with frictional
  contacts}.
\bjtitle{Computer Methods in Applied Mechanics and Engineering}
\bvolume{193}(\bissue{3-5}),
\bfpage{357}--\blpage{384}
(\byear{2004})
\end{barticle}
\endbibitem

\bibitem{Lin2012}
\begin{barticle}
\bauthor{\bsnm{Lin}, \binits{H.}},
\bauthor{\bsnm{Shu}, \binits{J.}}:
\batitle{A note on the spectral characterization of strongly connected bicyclic
  digraphs}.
\bjtitle{Linear Algebra and its Applications}
\bvolume{436}(\bissue{7}),
\bfpage{2524}--\blpage{2530}
(\byear{2012})
\end{barticle}
\endbibitem

\end{thebibliography}

\end{document}